\documentclass{article}
\usepackage{fullpage}
\usepackage{graphicx} 
\usepackage{amsfonts}
\usepackage{amsthm}
\usepackage{amsmath}
\usepackage{amssymb}
\usepackage{mathabx}
\usepackage{mathrsfs}
\usepackage{mathptmx}
\usepackage{comment}
\usepackage{tikz}
\usetikzlibrary{arrows.meta}
\usepackage{enumitem}
\usepackage{hyperref}
\usepackage[capitalise]{cleveref}
\crefname{enumi}{\unskip}{\unskip}
\usepackage{xcolor}
\usepackage[
backend=biber,
style=alphabetic,
sorting=nyt,
maxnames=99,
doi=false,
url=false,
isbn=false
]{biblatex}
\usepackage{titling}

\usepackage[a4paper,left=2.5cm,right=2.5cm, footskip=30pt]{geometry}

\newcommand{\Perm}{\mathop{\mathrm{Perm}}}
\newcommand{\Sym}{\mathop{\mathrm{Sym}}}
\newcommand{\ds}{\mathop{d_\square}}

\newcommand{\supp}{\mathop{\mathrm{supp}}}

\newtheorem{theorem}{Theorem}[section]
\newtheorem{lemma}[theorem]{Lemma}

\theoremstyle{definition}
\newtheorem{claim}[theorem]{Claim}
\newtheorem{remark}[theorem]{Remark}
\newtheorem{question}[theorem]{Question}

\title{On the approximation of permutons}
\thanksmarkseries{alph}
\author{Bal\'azs Maga\thanks{HUN-REN Alfréd Rényi Institute of Mathematics, Budapest, Hungary. Email: \texttt{magab@renyi.hu}
\newline Supported by the NKFIH 152535 project, funded by the Ministry of Innovation and Technology of Hungary from the National Research, Development and Innovation Fund.}}
\date{}

\begin{document}

\maketitle
\begin{abstract}
We study the optimal rectangular-discrepancy approximation of permutons by finite permutations. We transfer bounds from discrepancy theory to this more restricted setup. Moreover, we show that superlinear approximation can occur only for permutons supported by graphs of measure-preserving functions, and demonstrate how the local regularity of this function obstructs approximability. We also consider the biased Brownian separable permuton and prove a lower bound on its approximation error by showing that its supporting measure-preserving function has Lipschitz points almost surely.

\noindent \textbf{Keywords}: permutons, rectangular distance, star discrepancy, self-similarity, Brownian separable permuton
\end{abstract}

\section{Introduction}

A \emph{permuton} is a probability measure on the unit square with uniform marginals. Permutons are the natural limit objects of permutations \cite{HOPPEN201393}, where convergence is defined via convergent substructure densities arising from sampling (cf. graphons and graph convergence). Permutons have received much attention in recent years, see for example \cite{BASSINO2022108513, BorgaGwynneSun2025PermutonsMeandersLQG, Bouvel_Nicaud_Pivoteau_2026}.

We denote the set of permutons by $\Perm$. The convergence of a sequence of permutations $\pi_n\in \Sym(n)$ to $\mu\in\Perm$ is defined using pattern densities. The density $t(\sigma, \pi)$ of a pattern $\sigma\in \Sym(k)$ in $\pi\in \Sym(n)$ for $k\leq n$ is defined as the probability of $(\pi(i_j))_{j=1}^{k}$ being order-isomorphic to $(\sigma(j))_{j=1}^{k}$ for a uniform random increasing $k$-tuple $(i_j)_{j=1}^{k}$ in $[n]=\{1, 2, \dots, n\}$. For example, in the permutation $15342\in \Sym(5)$, five of the ten increasing 3-tuples  are order-isomorphic to 132 (153, 154, 152, 132, and 142), hence $t(132, 15342)=5/10.$

Analogously, the density $t(\sigma, \mu)$ of $\sigma$ in $\mu\in \Perm$ is defined as follows: taking an i.i.d. $k$-sample $(x_j, y_j)_{j=1}^{k}$, if $(i_j)_{j=1}^{k}$ is a permutation of $[k]$ such that $(x_{i_j})_{j=1}^{k}$ is an increasing sequence, $t(\sigma, \mu)$ is the probability of $(y_{i_j})_{j=1}^{k}$ being order-isomorphic to $(\sigma(j))_{j=1}^{k}$, see Figure \ref{fig:mu_random_permutations}. (A random permutation sampled this way from $\mu$ is called a \emph{$\mu$-random permutation}.) 

\begin{figure}\centering
\begin{tikzpicture}[scale=4]
    
    \begin{scope}
        \draw[thin, gray!20] (0,0) grid (1,1);

        \draw[thick] (0,0) -- (1,0) -- (1,1) -- (0,1) -- cycle;

        \fill[red, opacity=0.3] (0.5, 0.5) rectangle (1, 1);
        \draw[red, thick] (0.5, 0.5) rectangle (1, 1);

        \draw[red, ultra thick] (0,0) -- (0.5, 0.5);

        \fill[blue] (1/6, 1/6) circle (0.4pt);
        \fill[blue] (2/3, 0.875) circle (0.4pt);
        \fill[blue] (0.88, 0.6) circle (0.4pt);

        \node[below left, font=\scriptsize] at (0,0) {(0,0)};
        \node[above right, font=\scriptsize] at (1,1) {(1,1)};
        \node[below right, font=\scriptsize] at (0.5, 0.5) {(1/2, 1/2)};
    \end{scope}

    \begin{scope}[xshift=1.5cm] 
        \draw[thin, gray!20] (0,0) grid (1,1);

        \draw[thick] (0,0) -- (1,0) -- (1,1) -- (0,1) -- cycle;

        \fill[red, opacity=0.3] (0.5, 0.5) rectangle (1, 1);
        \draw[red, thick] (0.5, 0.5) rectangle (1, 1);

        \draw[red, ultra thick] (0,0) -- (0.5, 0.5);

        \fill[blue] (0.525, 0.54) circle (0.4pt);
        \fill[blue] (0.636, 0.9) circle (0.4pt);
        \fill[blue] (0.9, 0.72) circle (0.4pt);

        \node[below left, font=\scriptsize] at (0,0) {(0,0)};
        \node[above right, font=\scriptsize] at (1,1) {(1,1)};
        \node[below right, font=\scriptsize] at (0.5, 0.5) {(1/2, 1/2)};
    \end{scope}
    
\end{tikzpicture}
\caption{A permuton $\mu$ with two different 3-samples from it. With respect to the Lebesgue measure, $\mu$ has nonzero singular and absolutely continuous parts: mass 1/2 is placed uniformly on the segment ((0, 0), (1/2, 1/2)) and mass 1/2 uniformly on the subsquare shaded in red. The two 3-samples visualize the two essentially different ways a $\mu$-random permutation of length 3 might equal 132: either one point lands in the singular part and the remaining two determine the pattern 21 in the absolutely continuous part, or all of them lands in the absolutely continuous part and determine 132 in there. Writing up the probabilities for both of these outcomes, it is straightforward to check $t(132, \mu)=3/16+1/48=5/24$.} \label{fig:mu_random_permutations}
\end{figure}

A sequence of permutations $(\pi_n)_{n=1}^{\infty}$ is convergent if $|\pi_n|\to\infty$ and $t(\sigma, \pi_n)$ is convergent for each pattern $\sigma$. A fundamental result in permuton theory \cite{HOPPEN201393} is that a convergent permutation sequence determines a unique permuton whose pattern densities coincide with the corresponding limit densities. Conversely, we also know that any permuton $\mu$ is the limit of a permutation sequence, conveniently proved by using $\mu$-random permutations and showing that they are very close to $\mu$ with probability tending rapidly to one. This closeness is appropriately measured by the \emph{rectangular distance},
defined by
$$\ds(\mu, \nu)=\sup_{R=[a,b]\times [c, d]}|\mu(R)-\nu(R)|,$$
a metric which metrizes the topology induced by convergence in pattern densities and also the topology of weak convergence restricted to permutons. (For permutons, the supremum is actually a maximum.) By \cite[Lemma~4.2]{HOPPEN201393} a $\mu$-random permutation of length $k$ (or more precisely, the naturally associated permuton, properly defined in Section \ref{sec:main_results}) is at most $k^{-1/4}$ apart from $\mu$ in the rectangular distance with high probability. The proof given there actually guarantees distance $O((\log k/k)^{1/2})$ asymptotically almost surely, already shown in \cite[Lemma~4.9]{cooper2002quasirandompermutations} without using the language of permutons.

In this paper, we study the following question originally asked by Bálint Virág and communicated to the author by Balázs Ráth: to what extent can the approximation errors coming from random permutations be improved by a more careful choice? In other words, how well a given permuton can be approximated in $\ds$ by finite permutations? This can be thought of as the permuton version of the central question of measure-theoretic discrepancy theory.

\subsection{The main results} \label{sec:main_results}

The permuton $\widehat{\pi}$ naturally associated with $\pi\in\Sym(n)$ is defined as the uniform measure on $\bigcup_{i=1}^{n} \left[\frac{i-1}{n}, \frac{i}{n}\right]\times \left[\frac{\pi(i)-1}{n}, \frac{\pi(i)}{n}\right]$, see Figure \ref{fig:approx_example}. (We refer to these $n$ grid squares as the \emph{base squares} of either $\pi$ or $\widehat{\pi}$.) We put 
$$D_n(\mu)=\min_{\pi \in \Sym(n)} \ds(\mu, \widehat{\pi}).$$
\begin{figure}\centering
\begin{tikzpicture}[scale=4]
    \fill[red, opacity=0.3] (0.5, 0.5) rectangle (1, 1);
    
    \draw[red, ultra thick] (0,0) -- (0.5, 0.5);

    \begin{scope}[fill=blue!40, opacity=0.5]
        \foreach \col/\row in {0/0, 1/1, 2/2, 3/3, 4/7, 5/6, 6/5, 7/4} {
            \fill (\col*0.125, \row*0.125) rectangle ++(0.125, 0.125);
        }
    \end{scope}

    \draw[thick] (0,0) rectangle (1,1);

    \draw[red, thick] (0.5, 0.5) rectangle (1, 1);
    \draw[green, thick] (0.5,0.5) rectangle (0.75,0.75);

\end{tikzpicture}
\caption{The permuton $\mu$ from the previous figure approximated by the permuton $\widehat{\pi}$ (shaded blue) for $\pi=12348765$. The rectangle witnessing $\ds(\mu, \widehat{\pi})=1/8$ is shown with green boundary. The best approximation is given by $\pi'=12345768$, showing $D_8(\mu)=3/32$.} \label{fig:approx_example}
\end{figure}

Established results in measure-theoretic discrepancy theory \cite{Chen1985, Aistleitner2018} (to be explicitly stated in Section 2), partially coming from bounds given to Tusnády's problem on combinatorial and geometric discrepancy \cite{Nikolov2017} can be tailored to imply some bounds on $D_n$. Given the aforementioned results, the proofs are very simple: as we will see, approximation with finitely supported measures (as in discrepancy theory) has the same error decay rate as $D_n(\mu)$. We state the following theorem among the main results nevertheless for future reference.

\begin{theorem} \label{thm:universal_approximability}
    For any permuton $\mu$,
    $$D_n(\mu)=O(\log^{3/2} n/n).$$
    If $\mu$ is absolutely continuous,
    $$D_n(\mu)=\Omega(\log^{1/2} n/n).$$
    If $\supp \mu$ can be covered by finitely many curves that are monotone in both coordinates, then
    $$D_n(\mu)=\Theta(1/n).$$
\end{theorem}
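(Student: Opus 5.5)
The plan is to reduce everything to the discrepancy of $n$-point (atomic, equal-weight) approximations, for which the cited results apply, and then to move between such point sets and permutations at a cost of $O(1/n)$. Write $\mathcal{D}_n(\mu)$ for the infimum of $\ds(\mu,\nu)$ over measures $\nu=\frac1n\sum_{i=1}^n\delta_{p_i}$. The main comparison will be
$$\mathcal{D}_n(\mu)-O(1/n)\;\le\; D_n(\mu)\;\le\; C\,\mathcal{D}_n(\mu)+O(1/n).$$
The left inequality is easy. Given $\pi$, replace each base square of $\widehat{\pi}$ by an atom of mass $1/n$ at its centre. A rectangle cuts at most $O(1)$ rows and columns of base squares partially, and each row or column holds one base square. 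So the rectangular discrepancy changes by $O(1/n)$.

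The right inequality is the real step. Given an $n$-point approximation $\nu$ with discrepancy $\varepsilon$, I will turn it into a permutation.
\begin{itemize}
\item Sort the points by $x$-coordinate and by $y$-coordinate, breaking ties arbitrarily, and let $\pi$ be the permutation sending the $x$-rank of each point to its $y$-rank.
\item Compare $\widehat{\pi}$ with $\nu$ on a rectangle $[a,b]\times[c,d]$. Since $\mu$ has uniform marginals, the strip $[0,a]\times[0,1]$ has $\mu$-mass $a$, so $\nu$ has between $n(a-\varepsilon)$ and $n(a+\varepsilon)$ points with $x\le a$. Hence the $x$-rank of a point differs from $n\cdot x$ by at most $n\varepsilon+1$, and similarly in $y$.
\item Consequently the rank-rescaled point set is within $\varepsilon+1/n$ of $\nu$ coordinatewise, in the monotone-reparametrisation sense.
\item This lets me compare $\widehat{\pi}([a,b]\times[c,d])$ with $\nu([a',b']\times[c',d'])$ for slightly perturbed endpoints. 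I then control $\nu$ of thin strips using the uniform marginals of $\mu$ again: a strip of width $\delta$ has $\nu$-mass at most $\delta+2\varepsilon$.
\end{itemize}
Altogether this gives $D_n\le C\varepsilon+O(1/n)$.

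With the comparison in hand, the three bounds of \cref{thm:universal_approximability} follow from the corresponding statements in Section 2.
\begin{itemize}
\item \textbf{Upper bound.} The $O(\log^{3/2}n/n)$ bound for arbitrary probability measures on the square (Aistleitner et al., via Tusnády's problem and Nikolov's bound) applies in the rectangle form, since axis-parallel rectangles are differences of at most four anchored boxes.
\item \textbf{Lower bound.} For absolutely continuous $\mu$ I intend to use the Chen-type lower bound $\Omega(\sqrt{\log n}/n)$ for measures with a density. If the cited statement requires a density bounded below on some square, I will obtain that from Lebesgue density points and rescale. The left comparison inequality then transfers the bound to $D_n$.
\item \textbf{Finitely many monotone curves.} The lower bound $\Omega(1/n)$ holds for every $\mu$. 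Take a tiny square around a point of the support, for instance a base square of the optimal $\pi$: its $\widehat{\pi}$-mass is $1/n^2$ while shifting gives mismatch. Alternatively, use a square $[0,\frac1{2n}]\times[0,1]$-type strip argument with rounding, noting that $n$ equal atoms cannot match all masses better than $\frac1{2n}$. For the upper bound, I will split $\mu$ into its restrictions $\mu_j$ to the curves. Each $\mu_j$ is a measure on a monotone curve, so ordering along the curve gives a one-dimensional problem, where quantile points achieve discrepancy $O(1/n)$ for every rectangle, since a rectangle meets a monotone curve in an interval. Combining $k$ curves needs atom counts $n_j\approx n\mu_j$ with rounding error $O(k/n)$, and the non-curve part has measure zero.
\end{itemize}

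The main obstacle I expect is the right comparison inequality. The rank-rescaling step must really only cost $O(\varepsilon)$ uniformly over rectangles, using the marginal constraint on $\nu$ inherited from $\mu$, including atoms with coinciding coordinates.
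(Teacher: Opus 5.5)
Most of your plan works. The two-sided comparison between $D_n(\mu)$ and $n$-atom approximations is correct. Your rank-perturbation argument uses only $|r_x(p)/n-x_p|\le\varepsilon+1/n$ and the thin-strip bound, so it is a valid alternative to the paper's order-preserving ``flux'' argument in Lemma~\ref{lemma:ds_permutation_regularization}, and it handles ties cleanly. With it you get the first two statements and the $O(1/n)$ half of the third essentially as the paper does.

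The genuine gap is the lower bound $D_n(\mu)=\Omega(1/n)$ in the third statement. You assert it holds for every permuton, but it does not. Theorem~\ref{thm:regular_not_approximable}(3) and the remark after it construct $\mu_f$ with $D_{N_k}(\mu_f)=o(1/N_k)$, and this can decay arbitrarily fast along $N_k$. What holds for every $\mu$ is only that $D_n(\mu)\neq o(1/n)$ along the whole sequence. That is weaker than the $\liminf_n nD_n(\mu)>0$ you need for $\Theta(1/n)$.

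Both of your sketches break:
\begin{itemize}
\item The strip argument only shows that $n$ equal atoms miss some strip by $1/(2n)$, i.e.\ $\mathcal{D}_n(\mu)\ge 1/(2n)$. Your left comparison loses an additive $O(1/n)$ (up to $4/n$), which swamps this. Moreover $\widehat\pi$ itself has exactly uniform marginals, so no strip $[0,t]\times[0,1]$ ever detects a discrepancy.
\item The base-square sketch is in spirit the H\"older-cusp argument of Theorem~\ref{thm:regular_not_approximable}(2). (A base square has $\widehat\pi$-mass $1/n$, not $1/n^2$.) That argument needs $\mu=\mu_f$ with $f$ Lipschitz at some point. It cannot work for arbitrary $\mu$, and a curve-covered permuton need not even be of the form $\mu_f$: take $\tfrac12(\mu_{f_1}+\mu_{f_2})$ with $f_1(x)=x$, $f_2(x)=1-x$.
\end{itemize}

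The missing idea is that the lower bound must use the hypothesis. The paper extracts it from the upper bound you already have, via the bootstrapping argument of Theorem~\ref{thm:regular_not_approximable}(1).
\begin{itemize}
\item Suppose $D_n(\mu)\le K/n$ for all $n$, and $D_n(\mu)=\alpha/n$ for some $n$, attained by $\pi$.
\item Take $m=ln$ and any $\sigma\in\Sym(m)$, and look only at rectangles inside one base square $Q$ of $\pi$. There $\widehat\pi$ is a rescaled Lebesgue measure of mass $1/n$. So Theorem~\ref{thm:schmidt} with Lemma~\ref{lemma:step_permuton_permutation_point_measure_equivalence} gives $\ds(\widehat\sigma,\widehat\pi)\ge c\log l/(ln)$ for large $l$. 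A routine adjustment is needed if $Q$ contains fewer than $l$ base squares of $\sigma$.
\item Hence $D_m(\mu)\ge (c\log l-\alpha l)/m$. With $l=\lceil 1/\alpha\rceil$ this is at least $(c\log(1/\alpha)-2)/m$.
\item Comparing with $D_m(\mu)\le K/m$ forces $\alpha$ to stay above a positive constant depending only on $K$.
\end{itemize}
So $\liminf_n nD_n(\mu)>0$ holds precisely because $D_n(\mu)=O(1/n)$ along the full sequence. That step is what your proposal lacks.
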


It is worth pointing out that the upper bound in Theorem~\ref{thm:universal_approximability} is significantly better than what is guaranteed by the standard approximation via $\mu$-random permutations discussed in the introduction.

The last statement of Theorem \ref{thm:universal_approximability} shows that singular permutons allow better rates of approximation, though as a basic triangle inequality based argument shows error $D_n(\mu)=o(1/n)$ can hold along a subsequence at best. (This is sharpened below in Theorem \ref{thm:regular_not_approximable}.) Such superlinear approximation even along a subsequence is only possible for permutons carried by measure-preserving bijections:

\begin{theorem}\label{thm:only_bijections_are_well_approximable}
    Assume that $D_n(\mu)=o(1/n)$ along a subsequence for a permuton $\mu$. Then there exists a measure-preserving bijection $f:[0,1]\to[0, 1]$ defined almost everywhere such that $\mu=\mu_f$, i.e. $\mu$ is the uniform measure on the graph of $f$.
\end{theorem}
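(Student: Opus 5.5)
We will show that $\mu$ charges no rectangle $I\times J$ in a "spread" way. Concretely, the plan is to prove that for every interval $I\subseteq[0,1]$ and every $\varepsilon>0$, the conditional distribution of the $y$-coordinate given $x\in I$ is concentrated in the limit on an arbitrarily short interval. This forces the disintegration $\mu=\int \delta_x\otimes \mu_x\,dx$ to have Dirac fibers $\mu_x=\delta_{f(x)}$. The symmetric argument in the other coordinate then shows that $f$ is invertible almost everywhere, and uniform marginals give that $f$ is measure-preserving.

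The key quantitative step concerns a column of width $1/n$. Fix $n$ in the subsequence with $\ds(\mu,\widehat{\pi})=\delta_n/n$, where $\delta_n\to 0$. Take the vertical strip $S_i=[\frac{i-1}{n},\frac{i}{n}]\times[0,1]$. For any horizontal cut at height $c$, consider the rectangle $[\frac{i-1}{n},\frac{i}{n}]\times[0,c]$. Its $\widehat{\pi}$-mass is a piecewise linear function of $c$, equal to $0$ below the base square, to $1/n$ above it, and linear in between. Since $\mu$ agrees with this up to $\delta_n/n$, the mass $\mu(S_i)=1/n$ is concentrated, up to $2\delta_n/n$, in $[\frac{i-1}{n},\frac{i}{n}]\times[\frac{\pi(i)-1}{n}-\cdot,\frac{\pi(i)}{n}]$. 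More precisely, at most $2\delta_n/n$ of the mass of $S_i$ lies outside the square $Q_i$ with sides $[\frac{i-1}{n},\frac{i}{n}]$ and $[\frac{\pi(i)-1}{n},\frac{\pi(i)}{n}]$. Summing over $i$ gives
$$\mu\Big(\bigcup_{i=1}^n Q_i\Big)\ge 1-2\delta_n.$$
So $\mu$ is carried, up to mass $2\delta_n\to0$, by the graph of the step function $g_n(x)=\pi(\lceil nx\rceil)/n$ thickened by $1/n$.

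Now define $h(x,y)=|y-g_n(x)|$. The bound above says that $\mu(\{(x,y): |y-g_n(x)|>1/n\})\le 2\delta_n$. Let $(X,Y)\sim\mu$, with $X$ uniform. Then $g_n(X)\to Y$ in probability along the subsequence, so $(g_n(X))$ is Cauchy in probability. Since each $g_n(X)$ is $\sigma(X)$-measurable, the limit $Y$ is almost surely equal to a $\sigma(X)$-measurable variable $f(X)$. This is the main point and is where I expect care is needed. To keep it clean, I would pass to a further subsequence with $\sum\delta_n<\infty$ to get almost sure convergence via Borel–Cantelli, and define $f=\lim g_n$ where the limit exists. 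Hence $\mu$ is the pushforward of Lebesgue measure under $x\mapsto(x,f(x))$. Since the $y$-marginal is uniform, $f$ is measure-preserving.

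Bijectivity follows by applying the same argument to the transpose. The permuton $\mu^T$, obtained by reflecting in the diagonal, satisfies $D_n(\mu^T)=D_n(\mu)$, using $\pi^{-1}$. This gives $X=\tilde f(Y)$ almost surely. Then $\tilde f(f(x))=x$ for a.e. $x$, and likewise $f(\tilde f(y))=y$ for a.e. $y$. After discarding a null set, invariant under both maps because they are measure-preserving, $f$ is a bijection defined almost everywhere. The main obstacle I anticipate is not the limiting argument but the strip-mass estimate. One must check that rectangles of width exactly $1/n$ aligned with the grid are admissible test sets, which they are since the supremum in $\ds$ ranges over all closed rectangles. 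One must also handle the boundary mass of $\mu$ on grid lines, which is harmless since the marginals are uniform and so grid lines are $\mu$-null.
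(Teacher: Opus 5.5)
Your proof is correct, and it takes a genuinely different and shorter route than the paper's.

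\textbf{Shared ingredient.} Both arguments rest on the same column estimate. Within the column above a grid interval, the parts lying below and above the base square each carry $\mu$-mass at most $D_n(\mu)$. This is the paper's inequality $\mu(R_k^i)\le\alpha_k/n_k$.

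\textbf{The paper's route.} The paper uses this estimate to compare consecutive approximants $\widehat{\pi_k}$ and $\widehat{\pi_{k+1}}$.
\begin{enumerate}
\item It first thins the subsequence so that $\sum\alpha_k<\infty$ and $n_{k+1}/n_k>2^k$.
\item It counts base squares of $\pi_{k+1}$ lying in $R_k^i$. Borel--Cantelli then shows that the base squares $Q_k(x)$ over a.e.\ $x$ are eventually nested.
\item It defines $f(x)$ as the point of $\bigcap_k J_k(x)$. Injectivity off $f^{-1}(\mathbb{Q})$ follows because distinct columns eventually sit in distinct rows.
\item Finally it identifies $\mu_f$ as the weak limit of $\widehat{\pi_k}$ via Portmanteau.
\end{enumerate}

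\textbf{Your route.} You sum the column estimate over all columns and read it under $(X,Y)\sim\mu$ itself. This gives $\mathbb{P}(|Y-g_n(X)|>1/n)\le 2nD_n(\mu)$ for every $n$. So $Y$ is a limit in probability of $\sigma(X)$-measurable variables, and $\mu=\mu_f$ follows at once. You need no growth condition on $(n_k)$, no comparison between successive approximants, and no weak-limit identification.

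\textbf{Bijectivity.} Your transpose argument with $\pi^{-1}$ is also cleaner than the paper's, because it yields an explicit a.e.\ inverse $\tilde f$. The step you only sketched goes as follows. Let $E_0$ be the set where both $\tilde f\circ f$ and $f\circ\tilde f$ act as the identity. Take $E=\bigcap_w w^{-1}(E_0)$ over the countably many finite compositions $w$ of $f$ and $\tilde f$. This set has full measure because both maps are measure-preserving, and $f|_E$ is a bijection of $E$ with inverse $\tilde f|_E$.

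\textbf{What the paper's route buys.} It gives a structural fact about the optimal permutations themselves: their base squares are eventually nested along the thinned subsequence. Your argument gives only a.e.\ convergence $g_{n_k}\to f$.
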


The following theorem further refines the boundary of superlinear approximability and relates it to local regularity properties of $f$.

\begin{theorem} \label{thm:regular_not_approximable}
    Assume $\mu=\mu_f$ for an almost everywhere defined measure-preserving bijection.
    \begin{enumerate}
        \item If $D_n(\mu)=o(1/n)$ along a subsequence then $D_n(\mu)=\omega(1/n)$ along another subsequence.
        \item If $f$ is Hölder-$\alpha$ at a point $x$ for some $0<\alpha\leq 1$, then $D_n(\mu)=\Omega(n^{-1/\alpha}).$ In particular, if $f$ is Lipschitz at a point $x$, then no superlinear approximation is possible.
    \item The previous statement is sharp up to logarithmic factor: for every $0<\alpha\leq 1$ there exists an almost everywhere defined measure-preserving bijection $f$ which is Hölder-$\alpha$ at a dense set of points with Hausdorff dimension 1 and satisfies $D_n(\mu_f)=O\left(\frac{\log n} {n^{1/\alpha}}\right)$ along a subsequence.
    \end{enumerate}
\end{theorem}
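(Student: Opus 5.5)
\textbf{Part 2} is the cleanest, so I would prove it first by a direct one-rectangle argument. Suppose $|f(y)-f(x)|\le C|y-x|^\alpha$ for $y$ near $x$. Fix $n$ and choose $\delta$ with $C\delta^\alpha=1/(4n)$, so $\delta\asymp n^{-1/\alpha}$ and, for large $n$, $\delta<1/n$. Let
$$R=[x,x+\delta]\times[f(x)-1/(4n),\,f(x)+1/(4n)].$$
The Hölder condition puts the whole graph of $f$ over $[x,x+\delta]$ inside $R$, so $\mu_f(R)=\delta$. Now take any $\pi\in\Sym(n)$. Since $\delta<1/n$, the strip $[x,x+\delta]$ meets at most two columns, and in each column $\widehat\pi$ lives on a single base square with density $n$. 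Hence $\widehat\pi(R)\le n\cdot\delta\cdot\frac{1}{2n}=\delta/2$. (A sharper count gives $\delta/4$.) Therefore $\ds(\mu_f,\widehat\pi)\ge\delta/2=\Omega(n^{-1/\alpha})$ uniformly in $\pi$. The Lipschitz case is $\alpha=1$.

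\textbf{Part 1.} I would compare two grids. Let $n$ be in the good subsequence, so $\ds(\mu,\widehat{\pi_n})=\varepsilon_n/n$ with $\varepsilon_n\to0$. For $m=Ln$ with a fixed integer $L\ge2$, the triangle inequality gives $D_m(\mu)\ge \ds(\widehat{\pi_n},\widehat\sigma)-\varepsilon_n/n$ for every $\sigma\in\Sym(m)$. I would bound $\ds(\widehat{\pi_n},\widehat\sigma)$ from below with a rectangle of width $1/m$ and height $1/m$ placed over a base square of $\sigma$. There $\widehat\sigma$ has mass $1/m$, while $\widehat{\pi_n}$ has mass at most $n/m^2$. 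This gives a gap of order $(L-1)/(L^2n)$, but that is only $\Theta(1/m)$, not $\omega(1/m)$.

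To get $\omega(1/m)$ I would feed in the structure that superlinear approximation forces. Closeness to $\widehat{\pi_n}$ up to $o(1/n)$ means $f$ maps each $1/n$-column into the corresponding $1/n$-row, up to a set of measure $o(1/n)$. At intermediate scales $m$, chosen so that $n\ll m$ but $m\,\varepsilon_n\to0$, the base squares of $\sigma$ must then resolve a measure that, inside each $1/n$-square, is a measure-preserving rearrangement. I would then try to show that some $1/n$-block is forced to look, at scale $1/m$, far from any permutation. The fallback is to run the Part 2 argument at a point where the good approximations force local Lipschitz behaviour.

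\textbf{This is the step I expect to be the main obstacle.} The naive grid mismatch yields only a constant multiple of $1/m$, so an amplification across scales is genuinely needed.

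\textbf{Part 3.} I would build $f$ as a self-similar interval-exchange-type bijection using a rapidly growing scale sequence $N_{k+1}\approx N_k^{1/\alpha}$. At level $k$, each $1/N_k$-interval is cut into pieces of size $1/N_{k+1}$, and these are permuted rigidly by a fixed "block" pattern. The pattern is chosen so that the displacement of pieces is of order $N_k^{-1}\approx N_{k+1}^{-\alpha}$.
\begin{itemize}
\item \emph{Approximation bound.} Take $n=N_{k+1}$ and let $\pi$ be the level-$(k+1)$ combinatorial permutation. Then $\mu_f$ and $\widehat\pi$ agree on every base square up to deeper rearrangements. The discrepancy on a rectangle comes only from the boundary squares, each of mass $1/n$ but with internal error controlled recursively by the finer levels. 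Summing the geometric-type contributions over the levels should produce $O(\log n/n^{1/\alpha})$ after optimizing the growth of $N_k$; the $\log$ comes from the number of active levels.
\item \emph{Hölder points.} Points lying in the "identity part" of the block pattern at every level are Hölder-$\alpha$, because moving by $h$ displaces the image by at most about $h^\alpha$. This set is a Cantor-type set that meets every interval, and it has full dimension provided the identity part occupies a proportion tending to $1$ at each level.
\end{itemize}
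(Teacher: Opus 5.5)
Your Part 2 is correct, and it is the dual of the paper's argument. You take a thin rectangle $[x,x+\delta]\times[f(x)-\frac{1}{4n},f(x)+\frac{1}{4n}]$ that carries the full $\mu_f$-mass $\delta$, while $\widehat\pi$ can put at most $\delta/2$ there. The paper instead places a rectangle inside the base square above $x$ but outside the Hölder cusp, where $\mu_f$ vanishes and $\widehat\pi$ has density $n$. Your version avoids the case analysis on where $(x,f(x))$ sits; the parenthetical ``$\delta/4$'' is false in general but unused.

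Part 1, however, is not proved. The missing ingredient is Schmidt's lower bound (Theorem~\ref{thm:schmidt}) applied inside a single base square $Q$ of $\pi_n$. On $Q$ the measure $\widehat{\pi_n}$ is rescaled Lebesgue measure, and for $m=ln$ the $1/m$-grid cuts $Q$ into an $l\times l$ grid. So for any $\sigma\in\Sym(m)$, the restriction of $\widehat\sigma$ to $Q$ is a rescaled $l$-point permutation pattern. If $Q$ holds $j\neq l$ base squares of $\sigma$, then either $|j-l|$ is at least a constant times $\log l$ and $Q$ itself is a witness, or Schmidt's bound still applies. Rectangles inside $Q$ then give $\ds(\widehat\sigma,\widehat{\pi_n})\ge c\log l/m$, hence $\ds(\widehat\sigma,\mu)\ge (c\log l-l\varepsilon_n)/m$, and $l\approx\varepsilon_n^{-1}$ yields $\omega(1/m)$. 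The amplification you were looking for is the $\log l$ in the discrepancy of Lebesgue measure, not any fine structure of $f$ inside the blocks. Your fallback cannot supply it, for two reasons. Under the hypothesis of Part 1, Part 2 itself rules out Lipschitz points of $f$. Moreover, any rectangle of height $O(1/m)$, such as yours, separates $\mu_f$ and $\widehat\sigma$ by at most $O(1/m)$, because both measures have uniform $y$-marginals.

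In Part 3 your skeleton is the paper's: rigid substitution of block permutations at scales $N_{k+1}\approx N_k^{1/\alpha}$, plus the fact that a rectangle meets only four base squares partially. But the property that makes the approximation superlinear is missing, and your Hölder mechanism contradicts it. Inside a base square $Q$ of $\pi_k$, $\widehat{\pi_k}$ is rescaled Lebesgue measure, while $\mu_f$ is, up to $4/N_{k+1}$ in $\ds$, the rescaled $\widehat{\tau_{k+1}}$ of the next block. So the error of $\widehat{\pi_k}$ is governed by $\frac{1}{N_k}\ds(\lambda,\widehat{\tau_{k+1}})$, and it is $O(\log N_k/N_k^{1/\alpha})$ only if every block is a low-discrepancy permutation, $\ds(\lambda,\widehat{\tau_{k+1}})=O(\log n_{k+1}/n_{k+1})$. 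The paper gets this from Theorem~\ref{thm:halton} and Lemma~\ref{lemma:ds_permutation_regularization}, together with $\beta'N_k\le N_{k+1}^\alpha\le\beta N_k$. The logarithm is this Halton discrepancy, which Schmidt's theorem shows cannot be improved. It is not the number of active levels, which is only $O(\log\log n)$ and whose contributions are dominated by the first one.

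If the blocks are the identity on a proportion $1-o(1)$ of their pieces, as your dimension argument requires, then testing $[0,\frac{1}{2}]^2$ gives $\ds(\widehat\tau,\lambda)\ge\frac{1}{4}-o(1)$. Your witnesses then have error of order $1/n$, not superlinear. The paper obtains Hölder points with no constraint on the blocks. Any rigid rearrangement maps a level-$k$ interval onto an interval of length $1/N_k$. So if $x\in F_{k_0,r}$ (from level $k_0$ on, its nested intervals lie in the middle $(1-2r)$-portion of their parents) and $x'$ lies in the level-$k$ but not the level-$(k+1)$ interval of $x$, then $|x-x'|\ge r/N_{k+1}$ and $|f(x)-f(x')|\le 1/N_k\le\beta N_{k+1}^{-\alpha}$. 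A fixed proportion $1-2r$ per level already gives Hausdorff dimension one because $N_k$ grows superexponentially (Lemma~\ref{lemma:dimension}), so no proportion tending to $1$ is needed.
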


We remark that for $\alpha=1$, the third statement is trivial even without the logarithmic factor, as demonstrated by $f(x)=x$. We further note that $D_n$ can decay arbitrarily fast using similar constructions to what is given in the proof of the third statement of Theorem \ref{thm:regular_not_approximable}, but of course we need to sacrifice regularity.

The Brownian separable permuton was introduced by \cite{Bassino2018} as the distributional limit of the uniform random separable permutation of length $n$. This concept was generalized in \cite{Bassino2017}. There, the biased Brownian separable permuton was introduced as a one-parameter family \((\mu^p)\), and a universality result was proved: under mild conditions, uniformly random permutations from substitution-closed permutation classes converge to some \(\mu^p\). This family of permutons attracted significant attention. As proved in \cite{Maazoun_2020}, $\mu^p$ is almost surely supported by the graph of a measure-preserving function, and has self-similarity properties, implying a fractal structure. Capitalizing on this result, we prove a result about its pointwise regularity:

\begin{lemma}\label{lemma:brownian_lip}
    Almost surely, the measure-preserving function whose graph supports the biased Brownian separable permuton $\mu^p$ is Lipschitz at some points.
\end{lemma}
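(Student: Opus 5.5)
We will work with Maazoun's construction of $\mu^p$ from a Brownian excursion $e$ on $[0,1]$ with i.i.d.\ signs attached to its local minima. Each local minimum gets sign $+$ with probability $p$ and $-$ otherwise. For $x<y$ with $x,y$ not local minima, define $x\lhd y$ if the sign of the minimum of $e$ on $[x,y]$ is $+$, and $y\lhd x$ otherwise. The supporting function is then
$$\psi(t)=\mathrm{Leb}\{s: s\lhd t\}=\int_0^t \mathbf 1[\text{sign of }\min_{[s,t]}e = +]\,ds+\int_t^1 \mathbf 1[\text{sign of }\min_{[t,s]}e=-]\,ds,$$
and $\mu^p=\mu_\psi$. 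For $|h|$ small,
$$|\psi(t+h)-\psi(t)|\le |h|+\mathrm{Leb}\{s\notin[t,t+h]:\ s\text{ is ordered differently relative to } t \text{ and } t+h\}.$$
A point $s$ outside $[t,t+h]$, say $s<t$, can only change side if $\min_{[s,t]}e$ and $\min_{[s,t+h]}e$ are attained at different local minima. This forces $\min_{[t,t+h]}e<\min_{[s,t]}e$, and the signs must also differ.

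The plan is to find a point $t$ that is a \emph{two-sided running minimum from the edge of a large excursion-free region}. The simplest candidate is $t=0$ (or $t=1$). Since $e(0)=0$ is the global minimum, no $s$ can switch sides, giving $\psi(h)-\psi(0)\le h$. But endpoints are not really interior points, so we want a genuine interior point with the same property. We take $t_0$ to be a time where $e$ attains a \emph{strict one-sided record}: choose $t_0$ with $e(t_0)<e(s)$ for all $s\in(t_0,t_0+\delta]$ and $e(t_0)\le \min_{[t_0-\delta,t_0]}e$.

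Such points fail to exist as two-sided strict minima with linear control, because Brownian local minima are visited from both sides. So the key idea instead is to bound the "switching set" measure. For $s<t$, switching requires $\min_{[t,t+h]}e<\min_{[s,t]}e$. If $t$ is a point where $e$ increases from $t$ to the right at every scale (a right record point), then $\min_{[t,t+h]}e=e(t)$. Hence no $s<t$ can switch when $h>0$, and by symmetry we handle $s>t+h$ using $t+h$.

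The right tool is points of increase in a weaker form. Brownian motion has no points of increase, but it has "one-sided" ones: ascending ladder times from the left, i.e.\ $e(t)=\max_{[t-\delta,t]}e$, and descending ones from the right. What we need at $t$ is:
\begin{enumerate}
\item for $h>0$, a bound on $\mathrm{Leb}\{s>t+h: \min_{[t,s]}e<\min_{[t+h,s]}e\}$, i.e.\ $\min_{[t,t+h]}e<\min_{[t+h,s]}e$ together with a sign difference;
\item the same for $h<0$.
\end{enumerate}
The measure in (1) is the length of the interval of $s$ beyond $t+h$ before $e$ drops below $m_h=\min_{[t,t+h]}e$. If $t$ is a local minimum of $e$ (so $m_h=e(t)$ for small $h$), this is the time for $e$ after $t+h$ to return to level $e(t)$, which is $\le$ the excursion length above $e(t)$. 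That is not $O(h)$.

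The fix is to use the signs. At a local minimum $t^*$ with sign $+$, all $s$ in the excursion straddling $t^*$ compare via $t^*$ or via deeper minima inside. We therefore choose $t$ inside a sub-excursion in which all relevant minima near $t$ share one sign, so the sign-difference condition fails. Self-similarity gives a positive probability, at each scale, of a "good block": a dyadic-scale sub-excursion whose internal structure makes $\psi$ affine with slope $\pm1$ on a neighbourhood, or Lipschitz with a bounded constant.

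The intended proof therefore uses Maazoun's self-similarity of $\mu^p$: conditionally, $\mu^p$ restricted to a sub-box is a rescaled independent copy. We then show that the event "$\psi$ is Lipschitz with constant $\le C$ at some point of the box" has probability $q>0$ by a direct estimate, for instance a first-moment/second-moment argument on points where the nested-minimum signs stabilize. A 0--1 argument over infinitely many independent disjoint sub-boxes then upgrades this to almost surely. Establishing $q>0$ is the main obstacle, namely producing with positive probability an interior point whose nearby comparisons are all governed by same-signed minima at every scale. I would try a branching construction: at each scale, pick a sub-excursion with the right sign, so that the survival probability of an infinite nested choice is positive because the expected number of good children exceeds one.
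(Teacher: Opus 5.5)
Your proposal does not prove the lemma. By your own account the decisive step, producing with positive probability a point where $\psi$ is Lipschitz, is left open, and the sign-based criterion you suggest for it cannot work.

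\emph{The criterion is never met.} The minima that decide how points $s$ near $t$ compare with $t$ are the local minima at which the running minimum of $e$ from $t$ (to the left or right) is attained, i.e.\ the ancestors of $t$ in the tree coded by $e$. For every $t$ simultaneously these are dense along the ancestral line; otherwise $e$ would coincide with its running minimum, hence be monotone, on an interval. The signs are i.i.d.\ and independent of $e$, and there are only countably many pairs of local minima. Hence almost surely, for $0<p<1$, every stretch of every ancestral line carries minima of both signs. So no point has ``all nearby comparisons governed by same-signed minima.'' Choosing a right-signed sub-excursion at a discrete sequence of scales also fails, because it leaves the infinitely many ancestral minima between consecutive scales uncontrolled.

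\emph{It would not suffice anyway.} As your own item (1) shows, for $s>t+h$ the switch is decided by the sign of $\min_{[t+h,s]}e$, and that minimum is not an ancestor of $t$. The switching set is therefore controlled by the length of the piece of excursion above $\min_{[t,t+h]}e$ to the right of $t+h$. That is a geometric quantity, and no condition on the signs of the ancestors of $t$ makes it $O(h)$.

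The missing idea is that signs play no role and the control is purely geometric, which your item (1) already hints at. The switching set always lies inside the current sub-excursion, so you need $t$ to sit deep inside nested sub-excursions with uniform relative margins.

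In Maazoun's decomposition (Theorem~\ref{thm:maazoun}) every box $Q$ contains two sub-boxes $Q_1,Q_2$, the images of $\mu_1,\mu_2$ (the two sub-excursions at the chosen branch point). They satisfy $\mu^p(Q_i)=\lambda(\Pr_x Q_i)=\lambda(\Pr_y Q_i)$, so $f$ maps $\Pr_x Q_i$ into $\Pr_y Q_i$. Call a child $r$-good if its $x$-projection has length at least $r$ times the parent's and keeps relative distance at least $r$ from both endpoints of the parent's projection.

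Take an eventually $r$-good nested sequence $Q_n$ shrinking to $(x_{\mathbf i},y_{\mathbf i})$. Almost every $x\in\Pr_x Q_n\setminus\Pr_x Q_{n+1}$ then satisfies $|x-x_{\mathbf i}|>r^2\lambda(\Pr_x Q_n)$ and $|f(x)-f(x_{\mathbf i})|\le\lambda(\Pr_x Q_n)$. This gives a Lipschitz constant $r^{-2}$ regardless of the signs.

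Since $\Delta_0,\Delta_1,\Delta_2>0$ and $0<X_0<1$ almost surely, the number of $r$-good children tends to $2$ in probability as $r\to0$. So the Galton--Watson tree of good children is supercritical and survives with positive probability. Your final step, a 0--1 argument over countably many disjoint independent sub-boxes, then upgrades this to almost surely exactly as you intended.
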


Lemma \ref{lemma:brownian_lip}, coupled with Theorem \ref{thm:regular_not_approximable} directly yields the following theorem on the approximability of the biased Brownian separable permuton:

\begin{theorem}\label{thm:brownian_approx}
    For the biased Brownian separable permuton $\mu^p$ almost surely $D_n(\mu^p)=\Omega(1/n)$.
\end{theorem}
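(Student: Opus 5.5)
The statement to prove is Theorem~\ref{thm:brownian_approx}. The plan is to combine Lemma~\ref{lemma:brownian_lip} with the second part of Theorem~\ref{thm:regular_not_approximable}, working on a single almost sure event.

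First, by the result of \cite{Maazoun_2020} quoted above, there is an almost sure event $E_1$ on which $\mu^p=\mu_f$ for some almost everywhere defined measure-preserving bijection $f$ (random, depending on the realization). By Lemma~\ref{lemma:brownian_lip}, there is an almost sure event $E_2$ on which this $f$ is Lipschitz at some point $x$. We work on $E_1\cap E_2$, which still has probability one, and fix a realization from it. For that realization, $f$ is Lipschitz at $x$, i.e.\ Hölder-$1$ at $x$. Applying the second statement of Theorem~\ref{thm:regular_not_approximable} with $\alpha=1$ then gives $D_n(\mu^p)=\Omega(n^{-1})$.

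The only subtle point is the measurability and identification issue: the function in Lemma~\ref{lemma:brownian_lip} must be the same function representing $\mu^p$ in Theorem~\ref{thm:regular_not_approximable}. Two almost everywhere defined functions with the same graph measure agree almost everywhere. Pointwise Lipschitzness, however, is a statement about a specific version of $f$, so we must check that the lower bound argument behind Theorem~\ref{thm:regular_not_approximable}(2) applies to whichever version Lemma~\ref{lemma:brownian_lip} uses. I would handle this by taking the version constructed in \cite{Maazoun_2020} for both results, since it is the one Lemma~\ref{lemma:brownian_lip} is proved for. The implicit constant in $\Omega(1/n)$ is random, depending on the Lipschitz constant at $x$, which is allowed by the ``almost surely'' quantifier.
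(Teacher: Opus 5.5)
Your proposal is correct and matches the paper's argument: the paper obtains Theorem~\ref{thm:brownian_approx} directly by applying the second statement of Theorem~\ref{thm:regular_not_approximable} with $\alpha=1$ to the function provided by Lemma~\ref{lemma:brownian_lip}, on the almost sure event where both hold. Your concern about versions of $f$ is harmless but not needed. Once one version is Lipschitz at $x$, the lower bound only uses $\mu^p(R_t)=0$ for a rectangle $R_t$ outside the Lipschitz cusp at $(x,f(x))$, and that depends only on $\mu^p$.
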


Note that the previous theorem does not tell the actual decay rate of $D_n(\mu^p)$, we highlight this as an open question:

\begin{question} \label{quest:brownian_decay}
    For the biased Brownian separable permuton $\mu^p$, what is the almost sure decay rate of $D_n(\mu^p)$? 
\end{question}

\begin{remark}
We note that the construction proving the third statement of Theorem \ref{thm:regular_not_approximable} and Lemma \ref{lemma:brownian_lip} both rely primarily on the self-similar structure of the corresponding permutons. Consequently, the statements of Lemma \ref{lemma:brownian_lip} and hence Theorem \ref{thm:brownian_approx} are also valid for some other statistically self-similar permutons, such as the recursive separable permuton introduced in \cite{RecursiveSeparablePermuton} as the permuton limit of a Markovian model of separable permutations, or the permutons associated with random automorphisms of the infinite rooted $d$-ary tree \cite{maga2025samplingentropypermutons}. We omit the proofs as they are essentially identical to the proof of Lemma \ref{lemma:brownian_lip}. We take note of the fact that while for the latter of these two random permutons, Question \ref{quest:brownian_decay} also seems difficult, for the former, the bound \(D_n(\mu)=O(1/n)\) along a subsequence follows straightforwardly from the compatibility of the self-similar structure with a regular grid.
\end{remark}

\subsection{Discussion of the choice of $\widehat{\pi}$}

We note that while it is customary to define $\widehat{\pi}$ as we defined it, it is not canonical and the essence of permuton theory does not depend on this particular choice. Notably, instead of putting the uniform measure to each of the $1/n$ grid squares corresponding to the permutation matrix of $\pi$, one could put there any appropriately rescaled permuton, not necessarily the same one for different squares, to get a large family of permutons naturally associated with $\pi$. Denoting this family by $\Perm(\pi)$, one finds that for any choice of $\widehat{\pi}_n'\in \Perm(\pi_n)$, the convergence $\pi_n\to \mu$ in terms of pattern densities is equivalent to $\widehat{\pi_n}'\to \mu$ in $\ds$. Thus passing to such an admissible alternative of $\widehat{\pi}$ indeed makes no difference for the big picture. However, at least formally it certainly affects the study of superlinear approximations. In the most relaxed form one could define 
$$D_n'(\mu)=\min_{\pi \in \Sym(n)} \inf_{\widehat{\pi}'\in \Perm(\pi)\text{ }} \ds(\mu, \widehat{\pi}').$$
and study approximability in terms of $D_n'$. Inspecting the proof, it is not difficult to see that Theorem \ref{thm:only_bijections_are_well_approximable} holds for $D_n'$ as well, however, the statements of Theorem \ref{thm:regular_not_approximable} are no longer valid, displayed for instance by $f(x)=x$, for which $D_n'(\mu_f)=0$ for every $n$. (It is simple to show that $D_n'$ vanishes for every $n$ only for the monotone permutons.) Thus $D_n$ and $D_n'$ indeed behave differently to some extent, we choose to study $D_n$ for aesthetic reasons, such as the duality-flavored statements of Theorem \ref{thm:regular_not_approximable}.

\subsection{Organization of the paper}
In Section \ref{sec:prelim} we introduce relevant notation and recall some results of discrepancy theory. In Section \ref{sec:univ_approx} we discuss their consequences concerning our problem and prove Theorem \ref{thm:universal_approximability}. Section \ref{sec:superlin_approx} is devoted to the proof of Theorem \ref{thm:only_bijections_are_well_approximable}. In Section \ref{sec:regular_approx}, Theorem \ref{thm:regular_not_approximable} is proved after some preparations. Section \ref{sec:brownian_separable} contains the proof of Lemma \ref{lemma:brownian_lip}.

\section{Preliminaries and notation} \label{sec:prelim}

The set of probability measures on a set $S$ is denoted by $\mathcal{P}(S)$. Then $\Perm\subset \mathcal{P}([0, 1]^2)$. We denote projections to the $x$-axis and $y$-axis by $\Pr_x$ and $\Pr_y$, respectively. For a measure-preserving function $f$, the permuton \(\mu_f\) is the pushforward of the Lebesgue measure under \(x\mapsto(x,f(x))\).

The central question of measure-theoretic discrepancy theory is that how well finitely supported uniform distributions can approximate a given measure. To quantify this, star discrepancy is usually defined between finite point sets and measures. Slightly abusing this language, we define the star discrepancy of measures $\mu, \nu \in \mathcal{P}([0, 1]^2)$ as
$${\ds}^*(\mu, \nu)=\sup_{R=[0, x]\times [0, y]}|\mu(R)-\nu(R)|.$$

The following lemma is immediate from the inclusion-exclusion principle:

\begin{lemma}
    For any $\mu, \nu \in \mathcal{P}([0, 1]^2)$
    $${\ds}^* (\mu, \nu) \leq {\ds}(\mu, \nu)\leq 4{\ds}^*(\mu, \nu).$$
\end{lemma}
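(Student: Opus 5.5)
The first inequality is immediate. Every anchored box $[0,x]\times[0,y]$ is itself a rectangle of the form $[a,b]\times[c,d]$ with $a=c=0$. So the supremum defining ${\ds}^*(\mu,\nu)$ runs over a subfamily of the rectangles used for $\ds(\mu,\nu)$, which gives ${\ds}^*(\mu,\nu)\le\ds(\mu,\nu)$.

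For the second inequality, fix $R=[a,b]\times[c,d]$ and write the signed measure $\lambda=\mu-\nu$. Introduce the anchored-box function $F(x,y)=\lambda([0,x]\times[0,y])$, so that $|F|\le{\ds}^*(\mu,\nu)$ everywhere. By inclusion--exclusion,
\[
\lambda\bigl((a,b]\times(c,d]\bigr)=F(b,d)-F(a,d)-F(b,c)+F(a,c),
\]
and the triangle inequality bounds the absolute value of this by $4{\ds}^*(\mu,\nu)$.

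The one point needing care is boundary mass, since $R$ is closed while the inclusion--exclusion identity naturally computes the half-open box. I would handle this by approximation. Choose $a'\uparrow a$ and $c'\uparrow c$ (with $a',c'<0$ allowed, in which case the corresponding term is simply $0$). Then $(a',b]\times(c',d]$ decreases to $[a,b]\times[c,d]$. By continuity from above of the finite measures $\mu$ and $\nu$, $\lambda$ of these sets converges to $\lambda(R)$. Each approximant is bounded by $4{\ds}^*$ by the identity above, so the bound passes to the limit.

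Taking the supremum over $R$ then yields $\ds\le4{\ds}^*$. I expect no real obstacle; the only subtle step is the boundary issue, and the continuity argument above resolves it.
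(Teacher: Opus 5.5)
Your proof is correct and is the inclusion--exclusion argument the paper has in mind; the paper only states the lemma as immediate from inclusion--exclusion, without writing out details. Your continuity-from-above treatment of the boundary mass of the closed rectangle (including the case $a=0$ or $c=0$) fills in a detail the paper leaves implicit, and it does so correctly.
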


Consequently, the approximability rates in $\ds$ and $\ds^*$ coincide up to constant factor.

For a finite point multiset $P$ we write
$$\mu_P = \frac{1}{|P|}\sum_{p\in P}\delta_p.$$

To conclude the preliminaries, we recall a few results of discrepancy theory which are relevant in our discussion. First, for the approximation of the Lebesgue measure, we have the following matching bounds on the rate of approximation:

\begin{theorem}[Two-dimensional case of {\cite{Halton1960}}]\label{thm:halton}
    For any  $n$ there exist $x_1, \dots, x_n\in [0, 1]^2$ with
    $${\ds}^*\left(\frac{1}{n}\sum_{i=1}^{n}\delta_{x_i}, \lambda\right)= O\left(\frac{\log n}{n}\right).$$
\end{theorem}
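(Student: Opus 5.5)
The plan is to use the classical two-dimensional Halton (equivalently, Hammersley-type) construction and bound its star discrepancy against $\lambda$ by splitting anchored boxes into exact dyadic-type pieces plus a small error. Fix the coprime bases $2$ and $3$. Let $\phi_b$ denote the radical-inverse function: if $i=\sum_j a_j b^j$ in base $b$, then $\phi_b(i)=\sum_j a_j b^{-j-1}$. We take the points $x_i=(\phi_2(i),\phi_3(i))$ for $i=0,\dots,n-1$. Alternatively, since $n$ is fixed in the statement, we may use the Hammersley set $x_i=(i/n,\phi_2(i))$, which reduces the problem to one-dimensional van der Corput discrepancy. This second route is the one I would actually carry out, because it needs only one base.

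The key structural fact is the elementary-interval property of the van der Corput sequence. For each $k$ and each block of $2^k$ consecutive indices $\{m2^k,\dots,(m+1)2^k-1\}$, the values $\phi_2(i)$ hit each dyadic interval $[j2^{-k},(j+1)2^{-k})$ exactly once. The reason is that the last $k$ binary digits of $i$ run through all possibilities and determine the first $k$ digits of $\phi_2(i)$.

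Now fix an anchored box $[0,x]\times[0,y]$ in the Hammersley set. It contains the points with $i<nx$ and $\phi_2(i)\le y$. We proceed in three steps.
\begin{enumerate}
\item Expand $\lfloor nx\rfloor$ in binary. This splits the index range $\{0,\dots,\lceil nx\rceil-1\}$ into at most $\log_2 n+1$ aligned blocks of sizes $2^k$, with a boundary term of $O(1)$.
\item Within a block of size $2^k$, write $[0,y]$ as a union of whole dyadic intervals of length $2^{-k}$ plus one partial interval. The count in the block is $2^k y$ up to an error of at most $1$.
\item Sum over the blocks. The total error in the point count is $O(\log n)$. Dividing by $n$ and comparing with $\lambda([0,x]\times[0,y])=xy$ gives discrepancy $O(\log n/n)$, uniformly in $x,y$.
\end{enumerate}

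If one instead insists on the genuine Halton sequence, the same argument runs with $2$-adic blocks in the first coordinate and $3$-adic intervals in the second. By the Chinese remainder theorem, a block of length $2^a3^b$ distributes evenly over the boxes $2^{-a}\times 3^{-b}$. Decomposing in both bases then yields the weaker $O(\log^2 n/n)$, so for the stated $O(\log n/n)$ the Hammersley variant is the right choice.

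The main obstacle is the bookkeeping in steps 1 and 2. One must check that the digit decomposition produces only $O(\log n)$ pieces, each contributing $O(1)$ error, and that the partial-interval errors do not compound. The aligned-block structure handles this, since the blocks come from distinct binary digits of $\lfloor nx \rfloor$.
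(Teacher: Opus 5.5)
Your argument is correct, and it is essentially the standard proof behind the paper's citation. The paper gives no proof of its own, and the two-dimensional $O(\log n/n)$ bound it attributes to Halton is exactly the Hammersley set $(i/n,\phi_2(i))$, analysed through aligned binary blocks and the elementary-interval property as you do. The closed box $[0,x]$ actually contains the indices $i\le nx$ rather than $i<nx$, but this changes the count only by $O(1)$. Your remark that the genuine Halton sequence would give only $O(\log^2 n/n)$ is also right, and it explains why the statement lets the point set depend on $n$.
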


\begin{theorem}[{\cite[Theorem~2]{Schmidt_1972}}]\label{thm:schmidt}
    There exists $c>0$ such that for any $x_1, \dots, x_n\in [0, 1]^2$ we have
    $${\ds}^*\left(\frac{1}{n}\sum_{i=1}^{n}\delta_{x_i}, \lambda\right)\geq \frac{c\log n}{n}.$$
\end{theorem}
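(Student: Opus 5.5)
This result is quoted from \cite{Schmidt_1972}. Rather than Schmidt's original combinatorial argument, I would reprove it by Halász's Riesz-product form of Roth's orthogonal function method. Write $\mathcal{D}(x,y)=\#\{i: x_i\in[0,x]\times[0,y]\}-nxy$, so that the claim reads $\|\mathcal{D}\|_\infty\ge c\log n$. Fix $m$ with $2n\le 2^m<4n$. For each $k=0,\dots,m$, consider the dyadic rectangles $R=I\times J$ with $|I|=2^{-k}$ and $|J|=2^{-(m-k)}$. There are $2^m\ge 2n$ of them, and they are disjoint up to boundaries, so at least half contain none of the points. On each such rectangle let $h_R(x,y)=h_I(x)h_J(y)$ be the product of the $L^\infty$-normalized Haar functions, taking values $\pm1$ on the four quadrants of $R$. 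Set $f_k=\sum_{R\text{ empty}}\varepsilon_R h_R$, where the signs $\varepsilon_R$ are chosen so that each term of $\langle\mathcal{D},f_k\rangle$ is positive.

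\textbf{Linear term.} On an empty $R$, the counting part of $\mathcal{D}$ pairs to $0$ against $h_R$, because a point outside $R$ contributes a count function that is constant in $x$ or in $y$ across $R$. The term $-nxy$ pairs to $\pm n|R|^2/16$. Hence
\[
\langle\mathcal{D},f_k\rangle\ge 2^{m-1}\cdot n\,2^{-2m}/16\ge c_0>0
\]
for every $k$.

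\textbf{Riesz product.} Put $\Psi=\prod_{k=0}^m(1+\gamma f_k)$ with a small constant $0<\gamma<1$. Since $|f_k|\le 1$, we have $\Psi\ge0$. Expanding the product, every nonconstant term has mean zero, because each $f_k$ integrates to zero along the relevant coordinate direction. So $\int\Psi=1$, and therefore
\[
|\langle\mathcal{D},\Psi\rangle|\le\|\mathcal{D}\|_\infty .
\]
On the other side,
\[
\langle\mathcal{D},\Psi\rangle=\langle\mathcal{D},1\rangle+\gamma\sum_k\langle\mathcal{D},f_k\rangle+\sum_{j\ge2}\gamma^j\sum_{k_1<\dots<k_j}\langle\mathcal{D},f_{k_1}\cdots f_{k_j}\rangle .
\]
Using $|\langle\mathcal{D},1\rangle|\le\|\mathcal{D}\|_\infty$ and the linear bound above, we get
\[
2\|\mathcal{D}\|_\infty\ge\gamma c_0(m+1)-(\text{higher-order terms}).
\]
Since $m\asymp\log n$, it remains to show the higher-order terms are at most $C\gamma^2 m$.

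\textbf{Main obstacle: the higher-order terms.} The key two-dimensional fact is the product rule for dyadic Haar functions. If $k_1<\dots<k_j$, then $f_{k_1}\cdots f_{k_j}$ is a signed sum of Haar functions $h_{I}(x)h_{J}(y)$ with $|I|=2^{-k_j}$ and $|J|=2^{-(m-k_1)}$. These are supported on rectangles of area $2^{-(m+s)}$, where $s=k_j-k_1\ge j-1$.

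These rectangles are intersections of empty rectangles, so they are still empty, and the point part again pairs to zero. The $-nxy$ part contributes at most $n\,2^{-2(m+s)}$ per rectangle, summed over at most $2^{m+s}$ rectangles, giving $O(n2^{-m-s})=O(2^{-s})$. For a fixed pair $(k_1,k_j)$ there are at most $\binom{s-1}{j-2}$ choices of the intermediate indices. Summing over $j\ge2$ therefore gives at most
\[
\sum_{k_1}\sum_{s\ge1}\gamma^2(1+\gamma)^{s-1}2^{-s}\cdot O(1)=O(\gamma^2 m),
\]
provided $\gamma<1$.

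Choosing $\gamma$ small enough then yields $\|\mathcal{D}\|_\infty\ge c\,m\ge c\log n$. Dividing by $n$ gives the stated bound $\ds^*\ge c\log n/n$. The delicate point I would check carefully is the product rule for distinct $k$, including the mean-zero claim used for $\int\Psi=1$.
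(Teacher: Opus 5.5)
Your proof is correct. The paper gives no argument for this statement: it imports the bound from Schmidt's 1972 paper and uses it only as a black box (in Statement~1 of Theorem~\ref{thm:regular_not_approximable}). Your route therefore differs simply in being a proof at all, namely Halász's Riesz-product version of Roth's method rather than Schmidt's original argument.

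The details hold up. For $k_1<\dots<k_j$ and intersecting cells $R^{(i)}=I_i\times J_i$ of the prescribed shapes, one has $I_1\supsetneq\dots\supsetneq I_j$ and $J_1\subsetneq\dots\subsetneq J_j$. Hence $\prod_i h_{R^{(i)}}=\pm h_{I_j\times J_1}$, where $I_j\times J_1=\bigcap_i R^{(i)}$ is empty and determines the $R^{(i)}$. This single fact gives two things:
\begin{itemize}
\item $\int\Psi=1$, since every product of distinct $f_k$ is a signed sum of mean-zero Haar functions. This is the cleanest way to state your mean-zero claim.
\item The bound $n2^{-m-s}/16\le 2^{-s}/32$ per index tuple.
\end{itemize}
The binomial sum then gives higher-order terms of at most $\gamma^2(m+1)/(32(1-\gamma))$, against a linear term of at least $\gamma(m+1)/128$. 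So any $\gamma<1/5$ works.

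Two small points of hygiene:
\begin{itemize}
\item Call a cell empty when its interior (or its half-open version) contains no point. Then each of the $n$ points spoils at most one of the $2^m\ge 2n$ cells.
\item With the convention that $h_I=-1$ on the left half of $I$, one has $\langle xy,h_R\rangle=|R|^2/16$ for every $R$, so all $\varepsilon_R$ may be taken equal.
\end{itemize}

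What your approach buys over a bare citation is a short, self-contained proof that also shows where dimension two is used. The collapse of a product of equal-area dyadic Haar functions of distinct shapes into a single Haar function fails for boxes in dimension $d\ge3$. There, distinct shapes can share a side length in some coordinate, where $h_I^2=\mathbf{1}_I$. This is the basic obstruction to pushing the argument to the higher-dimensional question mentioned after Theorem~\ref{thm:schmidt}.
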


It should be noted that while \ref{thm:halton} extends to higher dimensions $k$ with $O(\log^{k-1}{n}/n)$, whether this is optimal is known as the Great Open Problem of discrepancy theory. On a related note, even in two dimensions it is unknown whether other measures can exhibit worse approximability properties, with the best known result being the following:

\begin{theorem}[{Two-dimensional case of \cite[Theorem~1]{Aistleitner2018}}]\label{thm:aistleitner}
    For any $\mu \in \mathcal{P}([0, 1]^2)$ and $n$ there exists $x_1, \dots, x_n\in[0, 1]^2$ with
    $${\ds}^*\left(\frac{1}{n}\sum_{i=1}^{n}\delta_{x_i}, \mu\right)= O\left(\frac{\log^{3/2}n}{n}\right).$$
\end{theorem}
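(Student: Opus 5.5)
The plan is to derive the statement from a combinatorial discrepancy bound through the \emph{transference principle}: repeatedly halving a very large point set approximating $\mu$ costs, at each stage, an error controlled by the combinatorial discrepancy of that point set with respect to anchored rectangles $[0,x]\times[0,y]$. The input I will take as a black box is the two-dimensional case of Nikolov's bound for Tusnády's problem \cite{Nikolov2017}. It states that any $m$ points in $[0,1]^2$ admit a coloring $\chi:P\to\{\pm1\}$ with
$$\Bigl|\sum_{p\in P\cap R}\chi(p)\Bigr|\le C\log^{3/2} m$$
for every anchored rectangle $R$. Since the full square is itself an anchored rectangle, this also gives $|\sum_{p\in P}\chi(p)|\le C\log^{3/2} m$.

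\emph{Step 1 (fine discretization).} Fix $n$ and choose $k$ so large that a multiset $P_0$ of $N=n2^k$ points satisfies ${\ds}^*(\mu_{P_0},\mu)\le 1/n$. Such a $P_0$ exists because an i.i.d.\ $\mu$-sample of size $N$ has star discrepancy $O(\sqrt{\log N/N})$ with positive probability. This is the classical Dvoretzky--Kiefer--Wolfowitz/VC-type bound, and it needs no assumption on $\mu$.

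\emph{Step 2 (halving).} Given a multiset $P_j$ of size $2m$, with $m=n2^{k-j-1}$, take a Nikolov coloring of $P_j$. First rebalance it so that exactly $m$ points receive $+1$: flip at most $C\log^{3/2}(2m)$ colors. This changes the signed sum over any rectangle by at most twice that amount, so the balanced coloring still has discrepancy $O(\log^{3/2}m)$ on all anchored rectangles. Let $P_{j+1}$ be the $+1$ points. For each anchored $R$ we have $|P_{j+1}\cap R|-\tfrac12|P_j\cap R|=\tfrac12\sum_{R}\chi$. Dividing by $m$ gives
$${\ds}^*(\mu_{P_{j+1}},\mu_{P_j})\le C'\frac{\log^{3/2} m}{m}.$$

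\emph{Step 3 (summation).} After $k$ halvings we reach $P_k$ with exactly $n$ points. By the triangle inequality for ${\ds}^*$,
$${\ds}^*(\mu_{P_k},\mu)\le \frac1n+\sum_{j=0}^{k-1}C'\frac{\log^{3/2}(n2^{k-j-1})}{n2^{k-j-1}}.$$
Reindexing by $i=k-j-1$, the sum is $\sum_{i\ge0}(\log n+i)^{3/2}/(n2^i)$, which is $O(\log^{3/2}n/n)$ uniformly in $k$, since $(\log n+i)^{3/2}\lesssim \log^{3/2}n+i^{3/2}$ and $\sum_i i^{3/2}2^{-i}<\infty$. This gives the claimed bound with the $x_i$ being the points of $P_k$.

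The whole difficulty sits in the black-box combinatorial bound $O(\log^{3/2}m)$ for anchored rectangles. If it had to be proved, I would follow Nikolov's route: bound the $\gamma_2$ factorization norm of the incidence matrix of points versus anchored rectangles by $O(\log m)$, using its product structure from one-dimensional prefix sets, each with $\gamma_2=O(\log m)$. A Banaszczyk-type vector balancing or partial-coloring argument then converts the $\gamma_2$ bound into hereditary discrepancy $O(\gamma_2\sqrt{\log m})=O(\log^{3/2}m)$. Everything else is routine bookkeeping: the rebalancing step, and the choice of $k$ so that the initial discretization error is negligible.
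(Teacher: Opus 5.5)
Your argument is correct and follows essentially the route behind the cited result. The paper gives no proof of its own but quotes \cite{Aistleitner2018}, whose proof is this same transference principle: a very fine $\mu$-approximating multiset, repeated balanced halving via low-discrepancy two-colourings, and a geometric sum of halving errors dominated by the final scale. That argument is likewise fed with Nikolov's $O(\log^{3/2}m)$ bound for Tusn\'ady's problem \cite{Nikolov2017}, and your bookkeeping (parity rebalancing, the $\sum_i(\log n+i)^{3/2}2^{-i}$ estimate) checks out.

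The one thing to correct is your closing aside on how the black box would be proved. The product structure gives $\gamma_2\le O(\log m)\cdot O(\log m)=O(\log^2 m)$, not $O(\log m)$. This is sharp, since $\gamma_2$ is multiplicative under Kronecker products; on a $\sqrt m\times\sqrt m$ grid it is $\Theta(\log^2 m)$. So $\gamma_2\sqrt{\log m}$ yields only the older $O(\log^{5/2}m)$, which would transfer to $O(\log^{5/2}n/n)$. Reaching $\log^{3/2}m$ needs an extra idea. For example, factorize only the $x$-prefix structure ($\gamma_2=O(\log m)$) and handle the $y$-direction with Banaszczyk's balancing theorem for signed series, applied to the points in increasing $y$-order. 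That keeps all partial sums in the pulled-back cube at a cost of only $\sqrt{\log m}$.
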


The following statement appears in substantially greater generality in \cite{Chen1985}, allowing non-uniform finite distributions and concerning various norms of the discrepancy.

\begin{theorem}[{Two-dimensional case of \cite[Corollary~1]{Chen1985}}] \label{thm:chen}
    For any $\mu \in \mathcal{P}([0, 1]^2)$ absolutely continuous, there exists some $c(\mu)>0$ such that for any $x_1, \dots, x_n\in[0, 1]^2$
    $${\ds}^*\left(\frac{1}{n}\sum_{i=1}^{n}\delta_{x_i}, \mu\right)> \frac{c(\mu)\log^{1/2} n}{n}.$$
\end{theorem}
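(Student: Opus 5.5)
The plan is to adapt Halász's Riesz-product proof of the $L^1$ (hence $\sup$) lower bound $\gtrsim \sqrt{\log n}/n$ for the Lebesgue measure, localised to a small square where the density $g=d\mu/d\lambda$ is roughly constant. Write the discrepancy function
$$D(x)=\frac1n\#\{i:x_i\in[0,x]\}-\mu([0,x]),$$
so that ${\ds}^*\geq \|D\|_\infty\geq |\langle D,F\rangle|/\|F\|_1$ for any test function $F$.

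\textbf{Step 1: localise.} Since $g\in L^1$ and $g>0$ on a set of positive measure, Lebesgue density gives a dyadic square $Q$ with $g\in[c,C]$ on all of $Q$ except a set of relative measure $\le\varepsilon$, for fixed constants $0<c<C$ and small $\varepsilon$. The final constant $c(\mu)$ will depend on $Q$, $c$ and $C$.

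\textbf{Step 2: empty dyadic rectangles.} For $m\approx\log_2(n|Q|)$ and each $k=0,\dots,m$, tile $Q$ into dyadic rectangles of shape $2^{-k}\ell\times 2^{-(m-k)}\ell$ with $\ell^2=|Q|$. Each such rectangle has area about $2/n$, so at least half of them contain no $x_i$. Call a rectangle $R$ \emph{good} if $g\in[c,C]$ on most of $R$; only an $O(\varepsilon)$ fraction are not good. For an empty good $R$, pair $D$ with the product Haar function $h_R$. The point-mass part of $D$ contributes nothing, and
$$\langle D,h_R\rangle=-\int g\,K_R,\qquad K_R(z)=\prod_{j=1,2}\int_{z_j}^{\infty}h_{R,j}(t)\,dt .$$
Each one-dimensional factor is a nonpositive tent supported on the corresponding side of $R$, so $K_R\ge0$ is a product of tents with $\int K_R\asymp|R|^2$. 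Hence $|\langle D,h_R\rangle|\gtrsim c|R|^2$ for good $R$, independently of how $g$ behaves elsewhere. Set
$$f_k=\sum_{R\ \text{good, empty}}\pm h_R,$$
with the signs chosen so that every term contributes positively. Then $\langle D,f_k\rangle\gtrsim c|Q|/n$ for each $k$.

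\textbf{Step 3: Riesz product.} Put $F=\prod_{k=0}^{m}(1+i\gamma f_k)-1$ with $\gamma\approx m^{-1/2}$. As in Halász's argument, $\|F\|_1\le 2$, since $|1+i\gamma f_k|\le (1+\gamma^2)^{1/2}$ and $\prod_k(1+\gamma^2)^{1/2}=O(1)$. The linear term gives $\gamma\sum_k\langle D,f_k\rangle\gtrsim \gamma m c|Q|/n\asymp \sqrt{\log n}/n$.

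\textbf{Step 4: higher-order terms (main obstacle).} It remains to control the higher-order terms $\gamma^{s}\langle D,f_{k_1}\cdots f_{k_s}\rangle$ for $s\ge2$. A product of such $r$-functions is again an $r$-function supported on rectangles of area about $2^{-(k_s-k_1)}/n$ that are still empty. For Lebesgue measure the pairing of $D$ with these products is computed exactly and summing the resulting geometric series shows these terms are dominated by the linear term. Here the pairing is $\int g\,K$ with $K$ a signed product kernel. On good regions where $g\approx$ const, the Lebesgue computation applies up to an error controlled by the oscillation of $g$. On the exceptional set of relative mass $\le\varepsilon$ I would bound the contribution using $\|K\|_\infty\lesssim|R|$ times the $\mu$-mass of the bad set, and choose $\varepsilon$ small enough that this is absorbed. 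An unbounded $g$ there is the real difficulty, so if needed I would first shrink $Q$ further, by Lebesgue differentiation in the strong sense for the Haar system (martingale convergence of dyadic averages in both coordinates), so that the oscillation of $g$ is small in $L^1(Q)$. Combining Steps 3 and 4 gives ${\ds}^*\ge |\langle D,F\rangle|/2\ge c(\mu)\log^{1/2}n/n$.
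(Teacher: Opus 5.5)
Your localized Halász argument is sound in outline, but Step~2 needs a correction and Step~4 is easier than you think.

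In Step~2, cells of area about $2/n$ are too large. There are only about $n|Q|/2$ of them per shape, while $Q$ may contain about $n\mu(Q)$ points, or more if the points cluster. So ``at least half are empty'' fails already for Lebesgue measure. First dispose of the case $\#(P\cap Q)>2n\mu(Q)$: there $|\mu_P(Q)-\mu(Q)|>\mu(Q)$, hence ${\ds}^*\geq\mu(Q)/4$, a constant. Otherwise take $m$ minimal with $2^m\geq 8n\mu(Q)$. Then at least $3/4$ of the cells of each shape are empty, and $|R|=2^{-m}|Q|\asymp_\mu 1/n$. Quantify ``good'' as $\lambda(R\cap\{g<c\})\leq|R|/8$. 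Since $0\leq K_R\leq|R|/4$ and $\int K_R=|R|^2/16$, this gives $\int gK_R\geq c|R|^2/32$. For $\varepsilon$ small, at least half of the cells of each shape are then good and empty.

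In Step~4 no oscillation control is needed. We have $f_{k_1}\cdots f_{k_s}=\sum\pm h_{R''}$ over pairwise disjoint empty cells $R''\subseteq Q$ of area $2^{-(k_s-k_1)}|R|$, and $\langle D,h_{R''}\rangle=-\int K_{R''}\,d\mu$. Therefore
\[
|\langle D,f_{k_1}\cdots f_{k_s}\rangle|\leq\sum_{R''}\frac{|R''|}{4}\,\mu(R'')\leq 2^{-(k_s-k_1)}\,\frac{|R|\,\mu(Q)}{4},
\]
however unbounded $g$ is. The usual summation bounds all terms with $s\geq2$ by $O\bigl((m+1)\gamma^2|R|\mu(Q)\bigr)=O(|R|\mu(Q))$. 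The linear term is $\gtrsim\sqrt{m}\,c\,|Q|\,|R|$, so it dominates. In particular you do not need the strong (multi-parameter) differentiation you mention as a fallback. In that form it is false for general $L^1$ densities anyway. Also, $\|F\|_1\leq 1+e^{1/2}$ rather than $2$, which is harmless.

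The paper gives no proof of this statement. It imports it from Chen, whose result covers general $L^q$ norms of the discrepancy and weighted point sets. Your route is a self-contained proof. Because it uses a Riesz product with $\|F\|_\infty=O(1)$, it actually gives the stronger bound $\|D\|_1\gtrsim_\mu\log^{1/2}n/n$. For the sup-norm statement needed here, though, the Riesz product is unnecessary. With the same $f_k$, Roth's choice $F=\sum_{k=0}^m f_k$ satisfies $\|F\|_2^2\leq(m+1)|Q|$ by orthogonality of the Haar functions. Cauchy--Schwarz then gives ${\ds}^*\geq\|D\|_2\geq\langle D,F\rangle/\|F\|_2\gtrsim_\mu\sqrt{m}/n$, with no higher-order terms to control at all.
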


\section{Universal approximation} \label{sec:univ_approx}

The rates given by Theorems \ref{thm:aistleitner}--\ref{thm:chen} are precisely in line with the statement of Theorem \ref{thm:universal_approximability}. To convert them to statements about $D_n$, we just need to convert point sets into permutons associated with permutations with small error in $\ds$. To this end, abusing the notation, we identify a permutation $\pi$ with the point set $\{(i/n, \pi(i)/n): i\in [n]\}\subseteq [0, 1]^2$. The following statement is immediate from the definitions:

\begin{lemma} \label{lemma:step_permuton_permutation_point_measure_equivalence}
    For any $\pi\in \Sym(n)$
    $$1/n\leq \ds(\widehat\pi, \mu_{\pi})\leq 4/n.$$
\end{lemma}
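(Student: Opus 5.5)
Both inequalities come from looking at a single rectangle $R=[a,b]\times[c,d]$ and counting how many base squares of $\pi$ it cuts. The measure $\mu_\pi$ puts mass $1/n$ at the top-right corner $(i/n,\pi(i)/n)$ of the $i$-th base square $Q_i=[\frac{i-1}{n},\frac in]\times[\frac{\pi(i)-1}{n},\frac{\pi(i)}{n}]$. The measure $\widehat\pi$ spreads mass $1/n$ uniformly over $Q_i$. So
$$\widehat\pi(R)-\mu_\pi(R)=\sum_{i=1}^n\Big(\widehat\pi(R\cap Q_i)-\tfrac1n\mathbf 1[(i/n,\pi(i)/n)\in R]\Big).$$
Each summand lies in $[-1/n,1/n]$. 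A summand vanishes whenever $Q_i\subseteq R$, and also whenever $Q_i$ is disjoint from $R$. The only exception is when $R$ touches $Q_i$ only at its corner or along a boundary segment, and then the summand is still bounded by $1/n$.

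\textbf{Upper bound.} I only need to control the terms where $Q_i$ meets the boundary of $R$ in a nontrivial way. Such $Q_i$ must have its column containing $a$ or $b$, or its row containing $c$ or $d$. Since $\pi$ is a permutation, each column and each row contains exactly one base square. Here I count a boundary coordinate lying on a grid line as selecting the one relevant column or row. So at most $4$ indices $i$ contribute, each by at most $1/n$. Taking the supremum over $R$ gives $\ds(\widehat\pi,\mu_\pi)\le 4/n$.

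\textbf{Lower bound.} It suffices to exhibit one rectangle with discrepancy at least $1/n$, or a sequence of rectangles approaching $1/n$. Take the degenerate closed rectangle $R=\{(1/n,\pi(1)/n)\}$, a single point. Then $\mu_\pi(R)=1/n$ while $\widehat\pi(R)=0$, because $\widehat\pi$ is absolutely continuous. If one prefers to avoid degenerate rectangles, use $R_\varepsilon=[1/n-\varepsilon,1/n]\times[\pi(1)/n-\varepsilon,\pi(1)/n]$. Then $\mu_\pi(R_\varepsilon)\ge 1/n$ and $\widehat\pi(R_\varepsilon)=O(\varepsilon^2 n)\to0$, so the supremum is at least $1/n$.

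The only mildly delicate point is the counting of boundary squares when $a,b,c,d$ fall exactly on grid lines, where a square may touch $R$ only along an edge. I will handle this by observing that a summand is nonzero only if $Q_i$ is neither contained in $R$ nor disjoint from it up to a boundary set. For such an index $i$, one of $a,b$ must lie in $[\frac{i-1}n,\frac in]$ or one of $c,d$ must lie in the row of $Q_i$. If a boundary value is shared by two adjacent columns, only one of them can have its corner status change. Consequently at most two such columns and two such rows contribute, which keeps the constant at $4$.
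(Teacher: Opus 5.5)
Your proof is correct and is the counting argument the paper leaves implicit when it calls the lemma immediate: the same ``at most four base squares are cut by a rectangle'' count reappears in the proof of Claim~\ref{claim:fast_growth_well_approximable}, and your degenerate point rectangle or shrinking square gives the lower bound. One wording fix: a summand is nonzero when $R$ meets $Q_i$ only in a boundary piece containing the corner $(i/n,\pi(i)/n)$, so your last paragraph should use exact disjointness rather than disjointness ``up to a boundary set''; exact disjointness still forces one of $a,b,c,d$ into the closed column or row of $Q_i$, so your count of at most two columns and two rows is unaffected.
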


A finite point set in $[0, 1]^2$ is \emph{generic} if its points have pairwise distinct $x$ and pairwise distinct $y$ coordinates. For a generic point set $P$ in $[0, 1]^2$, the pattern $\pi(P)$ it determines is straightforward to define: we move from left to right and read the height ordering to get a permutation, see Figure \ref{fig:permutation_regularization}. For a non-generic point set, or more generally for a multiset of points, ties may occur both in the left-to-right ordering and in the height ordering; we resolve all such ties arbitrarily.

\begin{figure}[htb]\centering
\begin{tikzpicture}[scale=4]

    \tikzset{
        panelgrid/.style={thin, gray!30, step=0.2},
        arrowstyle/.style={-{Stealth[scale=1.2]}, thin}
    }

    \begin{scope}[xshift=0cm]
        \draw[panelgrid] (0,0) grid (1,1);
        \draw[thick] (0,0) rectangle (1,1);
        
        \foreach \p in {(0.13, 0.885), (0.47, 0.48), (0.52, 0.11), (0.72, 0.34), (0.93, 0.5)} {
            \fill[green!70!black] \p circle (0.5pt);
        }
    \end{scope}

    \begin{scope}[xshift=1.3cm]
        \draw[panelgrid] (0,0) grid (1,1);
        \draw[thick] (0,0) rectangle (1,1);

        \coordinate (A1) at (0.13, 0.885); \coordinate (B1) at (0.2, 0.885);
        \coordinate (A2) at (0.47, 0.48);  \coordinate (B2) at (0.4, 0.48);
        \coordinate (A3) at (0.52, 0.11);  \coordinate (B3) at (0.6, 0.11);
        \coordinate (A4) at (0.72, 0.34);  \coordinate (B4) at (0.8, 0.34);
        \coordinate (A5) at (0.93, 0.5);   \coordinate (B5) at (1.0, 0.5);

        \foreach \i in {1,2,3,4,5} {
            \fill[green!70!black] (A\i) circle (0.5pt);
            \fill[blue] (B\i) circle (0.5pt);
            \draw[arrowstyle, blue!60] (A\i) -- (B\i);
        }
    \end{scope}

    \begin{scope}[xshift=2.6cm]
        \draw[panelgrid] (0,0) grid (1,1);
        \draw[thick] (0,0) rectangle (1,1);

        \coordinate (B1) at (0.2, 0.885); \coordinate (C1) at (0.2, 1.0);
        \coordinate (B2) at (0.4, 0.48);  \coordinate (C2) at (0.4, 0.6);
        \coordinate (B3) at (0.6, 0.11);  \coordinate (C3) at (0.6, 0.2);
        \coordinate (B4) at (0.8, 0.34);  \coordinate (C4) at (0.8, 0.4);
        \coordinate (B5) at (1.0, 0.5);   \coordinate (C5) at (1.0, 0.8);

        \foreach \i in {1,2,3,4,5} {
            \fill[blue] (B\i) circle (0.5pt);
            \fill[red] (C\i) circle (0.5pt);
            \draw[arrowstyle, red!60] (B\i) -- (C\i);
        }
    \end{scope}

\end{tikzpicture}
\caption{The first panel shows a generic point set $P$ with $\pi(P)=53124$. The second and third panel shows how $\pi(P)$ (as a point set) is obtained in the proof of Lemma \ref{lemma:ds_permutation_regularization} from $P$ in two steps: first we regularize the $x$ coordinates to obtain $P_{\rightarrow}$, then the $y$ coordinates to obtain $\pi(P)$.} \label{fig:permutation_regularization}
\end{figure}

In general $\mu_P$ and $\mu_{\pi(P)}$ can be far from each other. However:

\begin{lemma} \label{lemma:ds_permutation_regularization}
    Put $n=|P|$. If ${\ds}^*(\mu_P, \mu)<C/n$ for a permuton $\mu$ and some $C>0$, then
    $${\ds}^*(\mu_P, \mu_{\pi(P)})<\frac{2C+3}{n}$$
\end{lemma}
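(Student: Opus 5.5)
The plan follows the two-step regularization of Figure~\ref{fig:permutation_regularization}. Write $P=\{(x_i,y_i)\}_{i=1}^n$, indexed so that $x_1\le\dots\le x_n$ with ties broken as in the definition of $\pi(P)$. Let $P_{\rightarrow}=\{(i/n,y_i)\}$, so only the $x$ coordinates change. Then $\pi(P)$ is obtained from $P_{\rightarrow}$ by replacing each $y_i$ with $r_i/n$, where $r_i$ is the rank of $y_i$ (ties again broken consistently). By the triangle inequality for ${\ds}^*$ it suffices to prove
\[{\ds}^*(\mu_P,\mu_{P_\rightarrow})\le \frac{C+1}{n}+\frac{1}{2n}\quad\text{and}\quad {\ds}^*(\mu_{P_\rightarrow},\mu_{\pi(P)})\le \frac{C+1}{n}+\frac{1}{2n},\]
or any split that sums to less than $(2C+3)/n$. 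The two steps are symmetric, so I describe only the first.

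The key observation is that the marginal of $\mu_P$ on the $x$-axis is within $C/n$ of the uniform marginal of $\mu$ in Kolmogorov distance. Indeed, taking $R=[0,t]\times[0,1]$ gives $|\#\{i:x_i\le t\}/n - t|\le {\ds}^*(\mu_P,\mu)<C/n$ (with a limiting argument for strict inequalities). Consequently each point moves only slightly: $|x_i-i/n|\le (C+1)/n$. More usefully, take a quadrant $[0,s]\times[0,u]$. The points of $P$ and of $P_\rightarrow$ lying in it can differ only among indices $i$ for which exactly one of $x_i\le s$ and $i/n\le s$ holds. The set $\{i: x_i\le s\}$ is an initial segment $\{1,\dots,m\}$ with $|m/n-s|<C/n$. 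The set $\{i: i/n\le s\}$ is $\{1,\dots,\lfloor sn\rfloor\}$. Their symmetric difference is an interval of indices of length at most $|m-\lfloor sn\rfloor|\le C+1$. The counts in $[0,s]\times[0,u]$ therefore differ by at most $C+1$ points, so the difference is at most $(C+1)/n$ in measure. This step is the crucial one: it bounds the discrepancy by the number of displaced indices, not by geometric displacement, and so needs no regularity of $\mu$.

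The second step is identical with the roles of the coordinates swapped. Here we use that the $y$-marginal of $\mu_{P_\rightarrow}$ equals that of $\mu_P$, which is again within $C/n$ of uniform. Summing the two contributions gives $(2C+2)/n<(2C+3)/n$. The extra $1/n$ of slack covers the ties, boundary effects from closed versus half-open intervals, and the strict-versus-nonstrict inequality in the marginal bound. The main obstacle is this bookkeeping with ties in the multiset case. I would handle it by fixing the tie-breaking order once and arguing with index sets throughout, so that every comparison is between initial segments of a single linear order.
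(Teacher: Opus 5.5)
Your proposal is correct and is essentially the paper's proof. Both use the same two-step regularization $P\to P_\rightarrow\to\pi(P)$. In each step the count in any quadrant $[0,s]\times[0,u]$ changes by fewer than $C+1$ points, because order-preserving moves in one coordinate only shift the boundary of an initial segment; this gives $(C+1)/n$ per step. Your index-set bookkeeping is simply a precise rendering of the paper's ``no flux in opposite directions'' remark.
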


\begin{proof}
    Note that $\pi(P)$ as a point set can be obtained by first horizontally displacing points without changing their relative order by $x$-coordinate, then vertically displacing them without changing their relative order by $y$-coordinate. Denote the intermediate set by $P_{\rightarrow}$ (second panel of Figure \ref{fig:permutation_regularization}).
    
    Consider $R=[0, x]\times [0, 1]$. Putting $n_S(R)$ for the number of points of $S$ in $R$, we have
    $$|n_P(R)-nx|< C,$$
    while
    $$|n_{P_{\rightarrow}}(R)-nx|\leq 1,$$
    implying
    \begin{equation}\label{eq:point_difference}
    |n_P(R)- n_{P_{\rightarrow}}(R)|<C+1.
    \end{equation}
    As $P_{\rightarrow}$ is obtained from $P$ by moving points only along the $x$-axis in an order-preserving manner, the difference between the quantities $n_P(R), n_{P_{\rightarrow}}(R)$ solely comes from points entering $R$ or points exiting $R$, no points can cross the boundary in opposite directions simultaneously. This implies that
    \eqref{eq:point_difference} is valid for any $R'=[0, x]\times [0, y]$ in place of $R$, as there is no flux between $R', R\setminus R'$ either due to the horizontal displacements. Consequently,
    $${\ds}^*(\mu_P, \mu_{P_{\rightarrow}})<\frac{C+1}{n}.$$
    We argue the same way for replacing $P_{\rightarrow}$ by $\pi(P)$.    
\end{proof}

\begin{proof}[Proof of Theorem \ref{thm:universal_approximability}]
    For the first statement, consider the finite point set guaranteed by Theorem \ref{thm:aistleitner}. Using Lemma \ref{lemma:ds_permutation_regularization}, we can pass to a nearby permutation point set $\pi$ without changing the error rate, which in turn is in the $O(1/n)$ proximity of $\widehat{\pi}$ by Lemma \ref{lemma:step_permuton_permutation_point_measure_equivalence}. 

    The second statement follows immediately from Theorem \ref{thm:chen} and Lemma \ref{lemma:step_permuton_permutation_point_measure_equivalence}.

    For the third statement cover $\supp \mu$ with injective, coordinatewise monotone curves $(\gamma_i)_{i=1}^{k}$, parametrized by the closed intervals $(I_i)_{i=1}^k$. We can assume that these are ordered, $0<I_1<\dots <I_k$. We want these pieces to have disjoint ranges, so we put
    $$A_i = I_i \setminus \gamma_i^{-1}\left(\bigcup_{j=1}^{i-1}\gamma_j(I_j))\right).$$
    Now $\gamma:\bigcup_{i=1}^{k} A_i\to[0, 1]^2$ defined with $\gamma(x)=\gamma_i(x)$ once $x\in A_i$ is an injective function which is coordinatewise monotone restricted to each $A_i$, and $\gamma(\bigcup_{i=1}^{k}A_i)=\bigcup_{i=1}^{k}\gamma_i(I_i)\supseteq \supp \mu$. Below $\gamma(I)$ for an interval $I$ is understood as $\gamma(I\cap \bigcup_{i=1}^{k}A_i)$.

    Fix now $n$. 
    We claim that for $j=1, 2, \dots, n$ there exists $s_j \in \bigcup_{i=1}^{k}A_i$ with
    $$\mu(\gamma([0, s_j]))\in \left(\frac{j}{n}-\frac{1}{n^2}, \frac{j}{n}+\frac{1}{n^2}\right).$$
    Indeed, otherwise by interval-halving we can find a nested sequence of closed intervals $J_m$ with length converging to 0 such that $\mu(\gamma(J_m))>\frac{2}{n^2}$ for each $m$. Then for the common point $x$ of these intervals $\mu(\gamma(x))\geq \frac{2}{n^2}$, which is impossible because uniform marginals imply that $\mu$ has no atoms.

    Put now $P=\{\gamma(s_j)\}_{j=1}^n$. Note that by monotonicity, for any rectangle $R$ and $i=1, 2, \dots, k$ there exists some interval $I_{i, R}$ such that $R\cap \gamma_i(A_i)=\gamma(I_{i, R})$ and hence $$\mu(R)=\mu\left(\gamma\left(\bigcup_{i=1}^k I_{i, R}\right)\right)=\sum_{i=1}^{k}\mu(\gamma(I_{i, R})), \quad\mu_P(R)=\mu_P\left(\gamma\left(\bigcup_{i=1}^k I_{i, R}\right)\right)=\sum_{i=1}^{k}\mu_P(\gamma(I_{i, R})).$$

    Then by triangle inequality
    $$\ds (\mu, \mu_P)\leq \sup_R \left(\sum_{i=1}^{k}|\mu(\gamma(I_{i, R}))-\mu_P(\gamma(I_{i, R}))|\right).$$
    However, by the choice of $P$, for any $R, i$ 
    $$|\mu(\gamma(I_{i, R}))-\mu_P(\gamma(I_{i, R}))|\leq 2/n+2/n^2.$$
    This implies $\ds(\mu, \mu_P)\leq 2k/n+2k/n^2$ and then one can use Lemma \ref{lemma:ds_permutation_regularization} to replace $P$ by $\pi(P)$ and \ref{lemma:step_permuton_permutation_point_measure_equivalence} to replace $\pi(P)$ by $\widehat{\pi(P)}$ to ultimately obtain $D_n(\mu)=O(1/n)$. The lower bound on approximability follows from the Statement 1 of Theorem \ref{thm:regular_not_approximable} proved later: $O(1/n)$ approximability along the full sequence rules out $o(1/n)$ along a subsequence.
\end{proof}

\section{Superlinear approximation along a subsequence} \label{sec:superlin_approx}

\begin{proof}[Proof of Theorem \ref{thm:only_bijections_are_well_approximable}]
    Fix a subsequence $n_1, n_2, \dots$ in accordance with the assumption of the theorem, that is $D_{n_k}(\mu)<\alpha_{k}/n_k$ with $\alpha_{k}\to 0$.
    By discarding certain entries of the sequence, we may assume that $\alpha_{k}$ is decreasing and $\sum_{k=1}^{\infty} \alpha_{k} < \infty$. We may also assume that $\frac{n_{k+1}}{n_k}>2^k$. To ease notation, set $\mu_{k} = \widehat{\pi_{k}}$ with $\pi_{k}\in \Sym(n_k)$ realizing $D_{n_k}$.

     Recall that a base square of $\pi_{k}$ is a grid square of size $n_k^{-1}\times n_k^{-1}$ contained by the support of $\widehat{\pi_k}$. For $x\in [0, 1]\setminus \mathbb{Q}$, let $Q_k(x)=I_k(x)\times J_k(x)$ be the unique base square of $\pi_{k}$ for which $x\in I_k(x)$. (For rational $x$, this would not be unique.) We say that the sequence $Q_k(x)$ is \emph{eventually nested} if $Q_{k+1}(x)\subseteq Q_k(x)$ for sufficiently large $k$.

    \begin{claim}\label{claim:stab}
        For almost every $x\in [0, 1]$, $Q_k(x)$ is eventually nested.
    \end{claim}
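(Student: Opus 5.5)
Borel--Cantelli will do the work: I will show that the set $B_k$ of $x$ with $Q_{k+1}(x)\not\subseteq Q_k(x)$ has Lebesgue measure $O(\alpha_k)$ plus a term that is summable in $k$. Since $\sum_k \alpha_k<\infty$, almost every $x$ then lies in only finitely many $B_k$, and that is exactly eventual nestedness.

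\emph{Step 1: the two permutons are close.} By the triangle inequality,
$$\ds(\mu_k,\mu_{k+1})\le D_{n_k}(\mu)+D_{n_{k+1}}(\mu)<\frac{\alpha_k}{n_k}+\frac{\alpha_{k+1}}{n_{k+1}}\le \frac{2\alpha_k}{n_k},$$
using that $\alpha_k$ is decreasing and $n_{k+1}>n_k$.

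\emph{Step 2: one base square at a time.} Fix a base square $Q=I\times J$ of $\pi_k$. Then $\mu_k(Q)=1/n_k$, and $Q$ contains all the $\mu_k$-mass above $I$. Every base square of $\pi_{k+1}$ has side $1/n_{k+1}$, so it lies either entirely inside $Q$ or meets $Q$ only in a set of measure zero, or it straddles the boundary of $Q$. Straddling squares can occur only in the columns containing the endpoints of $I$ and in the rows containing the endpoints of $J$, so there are at most $4$ of them. Write $B_k(Q)=B_k\cap I$. A point $x\in I$ not lying in one of these at most $4$ boundary columns belongs to $B_k$ exactly when its $\pi_{k+1}$-base square lies in the strip $I\times([0,1]\setminus J)$. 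The $\mu_{k+1}$-mass of that strip is therefore at least $\lambda(B_k(Q))-4/n_{k+1}$. On the other hand, the strip has $\mu_{k+1}$-mass $|I|-\mu_{k+1}(Q)$. Since $\mu_k(Q)=|I|$ and $|\mu_k(Q)-\mu_{k+1}(Q)|\le 2\alpha_k/n_k$, this mass is at most $2\alpha_k/n_k$. Combining the two bounds gives
$$\lambda(B_k(Q))\le \frac{2\alpha_k}{n_k}+\frac{4}{n_{k+1}}.$$

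\emph{Step 3: sum over squares, then over $k$.} Summing over the $n_k$ base squares $Q$ of $\pi_k$ gives
$$\lambda(B_k)\le 2\alpha_k+\frac{4n_k}{n_{k+1}}<2\alpha_k+4\cdot 2^{-k}.$$
This is summable in $k$, by the choices $\sum_k\alpha_k<\infty$ and $n_{k+1}/n_k>2^k$. Borel--Cantelli now gives the claim; the rational $x$ form a null set and are ignored.

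The main obstacle is the bookkeeping at the boundary of $Q$ in Step 2. One must check carefully that, away from the $O(1)$ boundary columns, every failure of nesting is visible as mass of $\mu_{k+1}$ lying in the same column strip as $Q$ but outside $Q$. This relies on the fact that both permutons have uniform $x$-marginals and that $\mu_{k+1}$ is a union of full-column base squares. Using a single rectangle $Q$ together with the marginal identity, rather than trying to control the strip complement directly, keeps the estimate at $2\alpha_k/n_k$ per square.
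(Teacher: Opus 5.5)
Your proof is correct and follows essentially the same route as the paper: you bound the non-nesting set inside each base square of $\pi_k$ by $O(\alpha_k/n_k + 1/n_{k+1})$, sum over the $n_k$ squares, and apply Borel--Cantelli. The only difference is cosmetic. You test the single rectangle $Q$ against $\mu_k$ via the triangle inequality and turn its deficit into strip mass using the uniform marginal of $\mu_{k+1}$, whereas the paper tests the two complementary rectangles $R_k^1, R_k^2$ against $\mu$ directly, which gives $4\alpha_k$ where you get $2\alpha_k$.
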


    \begin{proof}[Proof of Claim \ref{claim:stab}]
        Consider a base square $Q_k = I_k \times J_k$ of $\pi_k$. Define the rectangles $R_k^1, R_k^2$ to be non-overlapping with $Q_k$ such that $$I_k\times [0, 1] = R_k^1 \cup Q_k\cup R_k^2.$$
        Plugging in these rectangles into the definition of $d_\square$ verifies that for $i=1, 2$
        \begin{equation}\label{eq:claim_stab}
        \mu(R_k^i)\leq \frac{\alpha_k}{n_k}.
        \end{equation}
        Now consider base squares of $\pi_{k+1}$ over $I_k$, the $x$-projections of these are $1/n_{k+1}$ grid intervals. As $n_k$ may not be a divisor of $n_{k+1}$, these grid intervals may not be compatible with the $1/n_k$ grid interval $I_k$, however, $I_k$ fully contains at least $\frac{n_{k+1}}{n_k}-2$ of them. Among the corresponding base squares there can be at most 2 which intersects $Q_k$ nontrivially, all others are either contained in $R_k^1$, $R_k^2$, or $Q_k$. Observe that $R_k^i$ ($i=1, 2$) can contain at most $2\alpha_k \frac{n_{k+1}}{n_k}$ of them. Indeed, otherwise in $R_k^i$, the difference of $\mu$ and $\mu_{k+1}$ would be at least $\frac{\alpha_k}{n_k}$ due to \eqref{eq:claim_stab}, contradicting $d_\square(\mu_{k+1}, \mu)\leq \frac{\alpha_{k+1}}{n_{k+1}}$. To sum up, we conclude the following for $Q_{k+1}(x)$ if $x\in I_k$ (and hence $Q_k = Q_k(x)$:
        \begin{itemize}
            \item on a set of measure at most $\frac{2}{n_{k+1}}$, we have no control over $Q_{k+1}(x)$, this measure corresponds to the overhanging parts of the $1/n_{k+1}$ grid intervals.
            \item on a set of measure at most $\frac{2}{n_{k+1}}$, $Q_{k+1}(x)$ partially intersects $Q_k(x)$, hence it is not contained in it.
            \item on a set of measure at most $\frac{4\alpha_k}{n_k}$, $Q_{k+1}$ is contained in either $R_k^1$ or $R_k^2$, hence it is disjoint from $Q_k$.
            \item in the remainder of $I_k$, $Q_{k+1}(x)\subseteq Q_k(x)$
        \end{itemize}
        Hence in $I_k$, $Q_{k+1}(x)\subseteq Q_k(x)$ apart from measure at most $\frac{4}{n_{k+1}} + \frac{4\alpha_k}{n_k}$.
        Multiplying by $n_k$ to take into account all possible choices of $I_k$, $Q_{k+1}(x)\subseteq Q_k(x)$ fails in $[0,1]$ at a set of measure at most
        $$\frac{4n_k}{n_{k+1}} + 4\alpha_k.$$
        By the assumptions on $(\alpha_k)$ and $(n_k)$, the resulting series is summable, thus by the Borel--Cantelli lemma, we have that $Q_{k+1}(x)\subseteq Q_k(x)$ holds for almost every $x$ for sufficiently large $k$. This proves the claim.
    \end{proof}

    For almost every irrational $x$, whose existence is guaranteed by the claim, denote by $k(x)$ the minimal index from where $Q_k(x)$ is nested. Since the intervals \(J_j(x)\) are nested and have lengths \(1/n_j\to0\), their intersection consists of a single point; define \(f(x)\) to be this point.
     
    Observe that the function \(f\) is injective outside \(f^{-1}(\mathbb Q)\). Indeed, for $x\neq y$ and sufficiently large $K>k(x), k(y)$, we have $J_K(x)\neq J_K(y)$, thus $f(x)=f(y)$ might happen only if these values coincide with the shared endpoint of $J_K(x)$ and $J_K(y)$.
    
    After defining \(f\) arbitrarily on the remaining null set, let \(\mu_f\) be the pushforward of the Lebesgue measure under \(x\mapsto(x,f(x))\). We claim that it is the weak limit of the sequence $\mu_{k}$. This would conclude the proof: as the limit is unique, we obtain $\mu_f = \mu$, which directly implies that $f$ is a measure-preserving function, and by omitting the null-set $f^{-1}(\mathbb{Q})$ from its domain, we get a measure-preserving bijection due to our observation about $f$ being almost injective.

    By the Portmanteau theorem, it suffices to show that for any open set $G\subseteq [0, 1]^2$ we have
    \begin{equation}\label{eq:portmanteau}
    \liminf_{k\to\infty} \mu_k(G)\geq \mu_f(G).
    \end{equation}
    Fix $G$, and let $H=\{x:\ (x, f(x))\in G\}$. If $\lambda(H)=0$, then $\mu_f(G)=0$ and we have nothing to prove. Otherwise for any $\varepsilon>0$ for every large enough integer $K$ we can find $H'\subseteq H$ with the properties 
    \begin{itemize}
    \item $\lambda(H')>\lambda(H)-\varepsilon$,
    \item for any $x\in H'$, the square with sides $\frac{2}{n_K}$ centered at $(x, f(x))$ is contained in $G$,
    \item for any $x\in H'$, $k(x)\leq K$.
    \end{itemize}
    (By taking $K\to \infty$, both the second and third properties hold for each $x\in H$, which can be replaced by a finite $K$ with arbitrarily small loss of measure.)

    Now it is clear that $Q_K(x)\subseteq G$ for $x\in H'$ and $H'\subseteq \bigcup_{x\in H'} I_K(x)$. This yields that there are at least $\lambda(H')n_K$ distinct intervals in this union, or equivalently, at least $\lambda(H')n_K$ distinct base squares of $\pi_K$ in $G$. As $\mu_K$ puts measure $1/n_K$ to each such square, $\mu_K(G)\geq \lambda(H)-\varepsilon$. This proves \eqref{eq:portmanteau}, hence concludes the proof of the theorem.
    
\end{proof}

\section{Approximation under regularity conditions} \label{sec:regular_approx}

Our goal in this section is to prove Theorem \ref{thm:regular_not_approximable}.
We will work with certain fractal constructions (Figure \ref{fig:fractal_scheme}). Given a sequence of integers $n_1< n_2< \dots $ and $N_k=\prod_{i=1}^{k}n_k$, the $k$th level grid intervals are the ones with length $N_k^{-1}$ in $[0, 1]$. Denote the family of such intervals by $\mathcal{I}_k$. (For the sake of completeness, $\mathcal{I}_0=\{[0, 1]\}$.) The subfamily containing those $I\in \mathcal{I}_k$ for which the unique $I'\in \mathcal{I}_{k-1}$ with $I'\supseteq I$ is such that $I\subseteq [\min I' + r|I'|, \max I'-r|I'|]$ is denoted by $\mathcal{I}_{k, r}$. In this case, we say that $I$ is a direct $r$-descendant of $I'$. Finally,
$$F_{k_0, r} = \bigcap_{k=k_0}^{\infty}\bigcup\mathcal{I}_{k, r}.$$
The primary role of $k_0$, i.e., not taking the intersection above over all \(k\geq1\) is to guarantee a nonempty intersection. 

\begin{figure}[htb]\centering
\begin{tikzpicture}[xscale=12, yscale=1.5]

    \draw[thick] (0, 0) -- (1, 0);
    \draw[thick] (0, 0.1) -- (0, -0.1);
    \draw[thick] (1, 0.1) -- (1, -0.1);

    \begin{scope}[yshift=-1cm]
        \draw[thick] (0,0) -- (1,0);
        \foreach \x in {0, 0.25, 0.5, 0.75, 1}
            \draw[thick] (\x, 0.1) -- (\x, -0.1);
        \draw[ultra thick, red] (0.25,0) -- (0.75,0);
    \end{scope}

    \begin{scope}[yshift=-2cm]
        \draw[thick] (0,0) -- (1,0);
        \foreach \x in {0,1,...,28}
            \draw[thin, gray!60] (\x/28, 0.04) -- (\x/28, -0.04);
        \foreach \x in {0, 0.25, 0.5, 0.75, 1}
            \draw[black, semithick] (\x, 0.08) -- (\x, -0.08);
        \draw[ultra thick, red] (9/28, 0) -- (12/28, 0);
        \draw[ultra thick, red] (16/28, 0) -- (19/28, 0);
    \end{scope}

    \begin{scope}[yshift=-3cm]
        \draw[thick] (0,0) -- (1,0);
        
        \foreach \x in {0,1,...,280}
            \draw[ultra thin, gray!40] (\x/280, 0.02) -- (\x/280, -0.02);
            
        \foreach \x in {0,1,...,28}
            \draw[thin, gray!80] (\x/28, 0.05) -- (\x/28, -0.05);

        \foreach \x in {0, 0.25, 0.5, 0.75, 1}
            \draw[black, semithick] (\x, 0.1) -- (\x, -0.1);

        
        \foreach \k in {9, 10, 11, 16, 17, 18} {
            \draw[ultra thick, red] (10*\k/280 + 3/280, 0) -- (10*\k/280 + 7/280, 0);
        }
        
    \end{scope}

\end{tikzpicture}
\caption{A few steps of the described fractal scheme for $n_1=4, n_2=7, n_3=10$. With red, we see the first three initial segments of the infinite intersection defining $F_{1, 1/4}$.} \label{fig:fractal_scheme}
\end{figure}

We obtain the following:

\begin{lemma} \label{lemma:dimension}
    If $k_0$ is large enough to guarantee that $F_{k_0,r}$ is nonempty, then for any $r<1/2$ we have $\dim_H F_{k_0, r}=1.$
\end{lemma}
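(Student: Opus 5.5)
The plan is to bound $\dim_H F_{k_0,r}$ from below by the mass distribution principle, using a natural uniform measure on the nested construction. The upper bound $\dim_H F_{k_0,r}\le 1$ is trivial since $F_{k_0,r}\subseteq[0,1]$.

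\textbf{Counting kept subintervals.} First I count children. A $(k-1)$th level interval $I'$ of length $N_{k-1}^{-1}$ is split into $n_k$ intervals of $\mathcal{I}_k$. Its direct $r$-descendants are those lying in the middle part of $I'$, which has relative length $1-2r$. So their number $m_k$ satisfies
\[
(1-2r)n_k-2\le m_k\le n_k .
\]
This count is the same for every $I'$. Since the $n_j$ are strictly increasing integers, $n_j\ge j\to\infty$. Hence for $k$ large we have $m_k\ge c\,n_k$ with $c=(1-2r)/2>0$. Enlarging $k_0$ only shrinks $F_{k_0,r}$ by discarding early stages, and the hypothesis on $k_0$ concerns nonemptiness, so I will assume this bound holds for all $k\ge k_0$ and that all $m_k\ge1$ there. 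In fact, for the lower bound it suffices to work inside a single nonempty level-$(k_0-1)$ cell. Put $M_k=\prod_{j=k_0}^{k}m_j$. Then $M_k\ge c^{k}N_k/N_{k_0-1}$.

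\textbf{The measure.} Define $\nu$ on $F_{k_0,r}$, restricted to one fixed level-$(k_0-1)$ interval, by splitting mass equally among kept children at every stage. Each kept level-$k$ interval then gets mass $1/M_k$, and Kolmogorov consistency gives a Borel probability measure supported on $F_{k_0,r}$.

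\textbf{Covering estimate.} Fix $s<1$ and an interval $U$ with $N_{k+1}^{-1}\le|U|<N_k^{-1}$. Then $U$ meets at most $2$ level-$k$ intervals and at most $|U|N_{k+1}+2\le 3|U|N_{k+1}$ level-$(k+1)$ intervals. Therefore
\[
\nu(U)\le\min\!\Big(\frac{2}{M_k},\,\frac{3|U|N_{k+1}}{M_{k+1}}\Big)\le \min\!\big(C^{k}N_k^{-1},\,C^{k}|U|\big)
\]
for some constant $C$ depending on $c$ and $N_{k_0-1}$. Interpolating $\min(a,b)\le a^{1-s}b^{s}$ gives
\[
\nu(U)\le C^{k}N_k^{-(1-s)}|U|^{s}.
\]
Because $N_k\ge k!$ grows superexponentially, $C^kN_k^{-(1-s)}$ is bounded uniformly in $k$. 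Hence $\nu(U)\le C_s|U|^s$ for all small $U$, and the mass distribution principle yields $\dim_H F_{k_0,r}\ge s$. Letting $s\uparrow1$ finishes the argument.

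\textbf{Main obstacle.} The delicate point is that $n_{k+1}$ may be enormous compared to $N_k$. In that case neither the coarse bound $2/M_k$ alone nor the fine bound alone gives $|U|^s$ across the whole scale range $[N_{k+1}^{-1},N_k^{-1})$, so a naive "log of counts over log of scale" argument could fail. The interpolation of the two bounds above is how I intend to handle this. The only inputs needed are that the losses $c^k$ are subexponential relative to $N_k$, which holds since $n_j\to\infty$, and the boundary effect of at most $2$ partial children per parent, which is absorbed by $m_k\ge c\,n_k$.
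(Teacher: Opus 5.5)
Your proposal is correct and follows essentially the paper's route: the equal-splitting measure on the nested construction, the mass distribution principle, and superexponential growth of $N_k$ to absorb the per-level losses. Your count $m_k\ge(1-2r)n_k-2$ is in fact more careful than the paper's ``$\sim$''.

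There is one slip to fix. Enlarging $k_0$ \emph{enlarges} $F_{k_0,r}=\bigcap_{k\ge k_0}\bigcup\mathcal{I}_{k,r}$, since fewer sets are intersected, so a lower bound for a larger $k_0$ does not transfer back as you claim. The correct repair is easy. The finitely many levels $k_0\le k<K$ with $m_k<cn_k$ still have $m_k\ge 1$ by nonemptiness, so only the constant changes: $M_k\ge c^kN_k/N_{K-1}$ for $k\ge K$. Equivalently, $F_{k_0,r}\cap I=F_{K,r}\cap I$ for any surviving level-$(K-1)$ cell $I$, so you may work inside such a cell.

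Also, your ``main obstacle'' is not actually one. Since $|U|<N_k^{-1}$, the fine bound is, up to constants, always the smaller of the two. On its own it already gives $\nu(U)\le C^{k}|U|\le C^{k}N_k^{-(1-s)}|U|^{s}$, which is exactly what your interpolation produces. This is precisely the paper's single estimate: cover $J$ by level-$k$ cells with $N_k^{-1}\le|J|<N_{k-1}^{-1}$, then use that $N_{k-1}^{1-d}$ beats $(1-2r)^{-(k-k_0)}$.
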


\begin{proof}
    The proof is a highly standard one in geometric measure theory, yet we include it for the sake of completeness. We define a probability measure $\nu$ on $F_{k_0, r}$ inductively: $\nu(F_{k_0, r}\cap I)=N_k^{-1}$ for $k<k_0$ and $I\in \mathcal{I}_k$, and for $I\in \mathcal{I}_{k, r}$ with $k\geq k_0$, if $I$ is the direct $r$-descendant of $I'\in \mathcal{I}_{k-1}$ then
    $$\nu(F_{k_0, r}\cap I)= \frac{\nu(F_{k_0, r}\cap I')}{\text{no. of direct $r$-descendants of $I'$}}.$$
    With this definition, it is immediate that $\nu$ is consistently defined on sets of the form $F_{k_0, r}\cap I$, and thus it extends uniquely to all Borel subsets of $F_{k_0, r}$ as a measure by Carathéodory's extension theorem. It is also straightforward to check that we have for $k\geq k_0$ and $I\in \mathcal{I}_{k, r}$
    $$\nu(F_{k_0, r}\cap I)=\frac{1}{\text{size of $\mathcal{I}_{k, r}$}}\sim \frac{1}{(1-2r)^{k-k_0}N_k}.$$

    We would like to apply the mass distribution principle, to which end we fix an interval $J\subseteq [0, 1]$ with $|J|<1/N_{k_0}$. There exists unique $k>k_0$ such that
    $$\frac{l}{N_k}\leq|J|<\frac{l+1}{N_k}$$
    for some $l\in \{1, 2, \dots, n_k-1\}$. Then $J$ is contained in the union of at most $l+1$ adjacent intervals of $\mathcal{I}_k$, hence the same holds for $J\cap F_{k_0, r}$ as well. Consequently,
    $$\nu(J\cap F_{k_0, r})\leq \frac{l+1}{(1-2r)^{k-k_0}N_k}.$$
    Thus mass distribution principle guarantees $\dim_H F_{k_0, r}\geq d$ once there exists $C$ with 
    $$\frac{l+1}{(1-2r)^{k-k_0}N_k}\leq \frac{Cl^d}{N_k^d}.$$
    Writing $l+1\leq 2l$, and replacing $l$ by $n_k$ it suffices to demonstrate the existence of $C$ with
    \begin{equation}\label{eq:mdp}
    2(1-2r)^{-(k-k_0)}\leq C\left(\frac{N_k}{n_k}\right)^{1-d}=CN_{k-1}^{1-d}.
    \end{equation}
    Note that as $n_k$ is a strictly increasing sequence, $N_k$ grows superexponentially, i.e., for any $a>0$ we have $N_k>a^k$ once $k$ is large enough. Thus the same holds for $N_{k-1}^{1-d}$. Thus \eqref{eq:mdp} is satisfied for arbitrary $r<1/2$ and $d<1$: even if we omit the factor $C$, the right hand side will eventually exceed the left hand side, and the finitely many exceptions can be handled by choosing $C$ large enough.
\end{proof}

\begin{proof}[Proof of Theorem \ref{thm:regular_not_approximable}]
    For the first statement, assume that $(n_k)_{k=1}^{\infty}$ is a subsequence along which $D_{n_k}(\mu)=\alpha_k/n_k$ with $\alpha_k\to 0$, witnessed by permutations $\pi_k\in \Sym(n_k)$. Take a base square $Q$ of $\pi_k$ and consider some $m_k=ln_k$ for $l$ to be fixed later. Then for any $\pi\in \Sym(m_k)$, merely observing rectangles fully in $Q$, Theorem \ref{thm:schmidt} implies that
    $$\ds(\widehat{\pi}, \widehat{\pi_k})\geq \frac{C_1 \log l}{n_kl}.$$
    (This follows immediately if $Q$ contains precisely $l$ many base squares of $\pi$. Otherwise the error is even larger.)
    Consequently
    $$\ds(\widehat{\pi}, \mu)\geq \frac{C_1 \log l}{n_kl}-\frac{\alpha_k}{n_k}.$$
    With $l\sim \alpha_k^{-1}$,
    $$\ds(\widehat{\pi}, \mu)\geq \frac{C_1 \alpha_k\log \alpha_k^{-1}}{n_k}-\frac{\alpha_k}{n_k}=(n_k/\alpha_k)^{-1}(C_1\log \alpha_k^{-1}-1)=\omega((n_k l)^{-1})=\omega(1/m_k).$$
    Thus such a choice of $l$ and hence $m_k$ is good for our purposes.

    For the second statement, fix $\pi\in \Sym(n)$ and let $\nu = \widehat{\pi}$. Assume that $f$ is Hölder-$\alpha$ in $x$ with Hölder constant $C\geq 1$. Consider (one of) the base square(s) $Q$ of $\pi$ above $x$. Moreover, consider the two-sided Hölder cusp (Figure \ref{fig:holder_cusp})
\[
\{(t, y):|y-f(x)|\leq C|t-x|^\alpha\}.
\]

\begin{figure}[htb]\centering
\begin{tikzpicture}[scale=5]
    \def\px{0.5}
    \def\py{0.5}
    \def\len{1/9}
    \def\recty{0.5 + 1/3} 

    \begin{scope}
        \clip (0,0) rectangle (1,1);

        \fill[blue!15, domain=0:1, samples=300] 
            plot (\x, {\py + sqrt(abs(\x - \px))}) -- 
            plot [domain=1:0] (\x, {\py - sqrt(abs(\x - \px))}) -- cycle;

        \draw[blue!40, thin, domain=0:1, samples=300] 
            plot (\x, {\py + sqrt(abs(\x - \px))});
        \draw[blue!40, thin, domain=0:1, samples=300] 
            plot (\x, {\py - sqrt(abs(\x - \px))});
    \end{scope}

    \draw[dashed, gray!80] (\px, 0) -- (\px, 1);

    \draw[thick, black] (\px, \recty) rectangle ({\px + \len}, 1);
    
    \fill[red, opacity=0.5] (\px, \recty) rectangle ({\px + \len}, 1);

    \draw[thick] (0,0) rectangle (1,1);

    \fill[black] (\px, \py) circle (0.4pt);
    \node[right, font=\scriptsize] at (\px, \py) {$(x, f(x))$};

\end{tikzpicture}
\caption{The Hölder cusp (blue) for $C=1, \alpha=1/2$ and the optimal $R_t$ (red) constructed in the proof in the case when $(x, f(x))$ is the center of $Q$. It is simple to see that regardless of the location of $(x, f(x))$ in $Q$, a rectangle of this size can be found in the complement of the Hölder cusp. (The worst-case scenario being when $(x, f(x))$ is the midpoint of a vertical side of $Q$.)} \label{fig:holder_cusp}
\end{figure}

$Q$ contains at least 1 region which is disjoint from this cusp and hence from the support of $\mu$. The idea is that we try to put a rectangle as large as possible into one of these regions to witness that $d_\square(\mu, \nu)$ is large.

    From now on we only have to do straightforward calculation. The larger of the regions disjoint from the cusp can accommodate a rectangle $R_t$ with side lengths $t, \frac{1}{2n}-Ct^{\alpha}$, for any $0<t<\frac{1}{2n}$. By differentiation, we get that its area is maximal for
    $$t=\left(\frac{1}{2Cn(\alpha+1)}\right)^{\frac{1}{\alpha}}.$$
    (By $C\geq 1$, this value of $t$ is indeed in the interval $(0, 1/2n)$.) For this value, 
    $$\lambda(R_t)\geq \frac{c}{n^{1+1/\alpha}}$$ 
    for some $c>0$ depending on $C, \alpha$. Now $\mu(R_t)=0$, while $\nu$ has density $n$ in the base square $Q$, thus $\nu(R_t)\geq \frac{c}{n^{1/\alpha}}$.

    It remains to prove the third statement. Fix a sequence $n_1< n_2< \dots$ and put $N_k = \prod_{i=1}^{k}n_i$. We also fix permutations $\tau_k \in \Sym(n_k)$ with $\ds(\lambda, \widehat{\tau_k})\leq C\frac{\log n_k}{n_k}$, guaranteed to exist by Theorem \ref{thm:halton} and Lemma \ref{lemma:ds_permutation_regularization}. We build a sequence of permutations as follows: we start with $\pi_1=\tau_1\in \Sym(N_1)$, and if $\pi_{k-1}\in \Sym(N_{k-1})$ is given, we put
    $$\pi_k = \pi_{k-1}[\tau_k, \tau_k, \dots, \tau_k]\in \Sym(N_k),$$
    accounting for $N_{k-1}$ distinct copies of $\tau_k$ arranged according to $\pi_{k-1}$.
    Then $\supp \widehat{\pi_k}\subset \supp \widehat{\pi_{k-1}}$, and $\bigcap_{k=1}^{\infty}\supp \widehat{\pi_k}$ is a graph of an almost everywhere defined measure preserving bijection $f$, for which $\mu_f$ is the weak limit of $\widehat{\pi_k}$.

    \begin{claim} \label{claim:controlled_growth_holder}
        For $\alpha<1$, if $n_1< n_2< \dots$ are such that $N_{k+1}^{\alpha}\leq \beta N_k$ for some $\beta>0$ and every $k$ then $f$ is Hölder-$\alpha$ in a dense set of points with Hausdorff dimension one.
    \end{claim}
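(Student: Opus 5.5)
We will show that every point of a set of the form $F_{k_0,r}$ from Lemma \ref{lemma:dimension} (with some fixed $r<1/2$, say $r=1/4$) is a Hölder-$\alpha$ point of $f$. By Lemma \ref{lemma:dimension}, such sets have Hausdorff dimension one. For density, the construction is self-similar: for every $I\in\mathcal{I}_{k_1}$, the points of $I$ satisfying the $r$-descendant condition from level $k_1+1$ on form a rescaled copy of the same type of set. Hence, for suitable large $k_0$, the union of the translates of $F_{k_0,r}$ over all $I\in\mathcal{I}_{k_1}$ and all $k_1$ is dense, and each copy has dimension one. So it suffices to prove the following: there is a constant $C$ such that for each $x\in F_{k_0,r}$, $|f(t)-f(x)|\le C|t-x|^\alpha$ for all $t$ near $x$.

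The key structural fact comes from the substitution construction of $\pi_k$. For $I\in\mathcal{I}_k$, the graph of $f$ over $I$ lies in the base square of $\pi_k$ above $I$, which has side $N_k^{-1}$. Consequently, $s,t\in I$ implies $|f(s)-f(t)|\le N_k^{-1}$.

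Fix $x\in F_{k_0,r}$ and $t$ with $|t-x|$ small. Let $k\ge k_0$ be maximal such that $x$ and $t$ lie in the same interval $I'\in\mathcal{I}_{k-1}$ (up to a null set of endpoints, which we handle by continuity of the bound or by ignoring them, since $f$ is only defined a.e.). Then $|f(t)-f(x)|\le N_{k-1}^{-1}$. Now $x$ also lies in its level-$k$ interval $I\subset I'$, and $I$ is a direct $r$-descendant of $I'$, so $x$ is at distance at least $r|I'|=r/N_{k-1}$ from the endpoints of $I'$. There are two cases.
\begin{itemize}
\item If $t$ lies in the same level-$k$ interval as $x$, then $k$ was not maximal. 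This cannot happen, so this case does not arise.
\item Otherwise, $t\in I'$ lies in a different level-$k$ interval. Here we only know $|t-x|$ is bounded below by the distance from $x$ to the boundary of its level-$k$ interval $I$, and that distance is controlled by the descendant condition at level $k+1$: $x$ lies in a level-$(k+1)$ interval at distance $\ge r|I|$ from $\partial I$, so $|t-x|\ge r/N_k$.
\end{itemize}
Therefore $|f(t)-f(x)|\le N_{k-1}^{-1}$ while $|t-x|^\alpha\ge r^\alpha N_k^{-\alpha}$. The hypothesis $N_k^\alpha\le\beta N_{k-1}$ gives
$$N_{k-1}^{-1}\le \beta N_k^{-\alpha}\le \beta r^{-\alpha}|t-x|^\alpha,$$
which is the Hölder bound with $C=\beta r^{-\alpha}$. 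If $t$ lies outside the level-$(k_0-1)$ interval containing $x$, then $|t-x|$ is bounded below by a constant (again by the $r$-descendant condition at level $k_0$), and the bound holds trivially after enlarging $C$.

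The main care point is the indexing. The right choice is to use the largest common level $k-1$ for the function estimate, and to use the descendant condition at level $k+1$ (relative to $I\in\mathcal{I}_k$) to bound the distance $|t-x|$ from below by $r/N_k$. This exponent bookkeeping is exactly where the growth hypothesis $N_{k+1}^\alpha\le\beta N_k$ enters. The other delicate point is verifying density, via the rescaled copies of $F_{k_0,r}$ inside every $I\in\mathcal{I}_{k_1}$ described above. We also need $F_{k_0,r}$ to be nonempty for large $k_0$: the number of direct $r$-descendants is about $(1-2r)n_k\ge1$ once $n_k$ is large, and nested compactness then gives a point.
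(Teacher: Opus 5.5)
Your proof is correct and is essentially the paper's argument. The paper likewise takes $x'\in I_k\setminus I_{k+1}$ with $x\in I_{k+2}$, bounds $|f(x)-f(x')|\le 1/N_k$ via the base square over $I_k$ and $|x-x'|\ge r/N_{k+1}$ via the $r$-descendant condition, and then applies $N_{k+1}^{\alpha}\le\beta N_k$; this is your computation with the index shifted by one. Your density-and-dimension step also matches the paper's (the union of the sets $F_{k_0,r}$ together with Lemma~\ref{lemma:dimension}). The only imprecision is that the sets $F_{k_1+1,r}\cap I$ are not literally translates of a single $F_{k_0,r}$, which is harmless because your Hölder bound holds for every $k_0$.
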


    \begin{proof}
        We will show that using the notation of Lemma \ref{lemma:dimension}, $f$ is Hölder-$\alpha$ at points of $F_{k_0, r}$. As the union of all these sets is dense and Lemma \ref{lemma:dimension} guarantees the clause on the dimension, this is sufficient.

        Fix $k_0, r$ and let $x\in F_{k_0, r}$. It suffices to check that for $x'\neq x$ near enough to $x$, $|f(x)-f(x')|\leq C_1|x-x'|^{\alpha}$ for some constant $C_1$. Now we can pick $I_k\supseteq I_{k+1}\supseteq I_{k+2}$ such that $I_j\in \mathcal{I}_{j, r}$ for $j=k, k+1, k+2$, and $x'\in I_{k}\setminus I_{k+1}$, while $x\in I_{k+2}$. Then
         $$|x-x'|\geq r|I_{k+1}|=\frac{r}{N_{k+1}}.$$
        On the other hand, $f(x), f(x')\in f(I_k)$ and $f(I_k)$ is an interval of length $\frac{1}{N_k}$, hence
        $$|f(x)-f(x')|\leq \frac{1}{N_k}.$$
        Comparing these bounds and inspecting the condition of the Claim \ref{claim:controlled_growth_holder} concerning the growth of $N_k$, this concludes the proof of Claim \ref{claim:controlled_growth_holder}.
    \end{proof}

    \begin{claim} \label{claim:fast_growth_well_approximable}
        If $n_1< n_2< \dots$ are such that $\beta' N_k\leq N_{k+1}^{\alpha}\leq \beta N_k$ for some $\beta, \beta'>0$ and every $k$ then $\ds(\mu_f, \widehat{\pi_k})=O\left(\frac{\log n}{n^{1/\alpha}}\right)$ whenever $n=N_k$.
    \end{claim}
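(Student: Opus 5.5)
We estimate $\ds(\mu_f,\widehat{\pi_k})$ with $n=N_k$ by splitting a test rectangle along the level-$k$ grid. The first observation is structural. Let $Q$ be a base square of $\pi_k$, so $Q=I\times J$ with $I,J$ level-$k$ grid intervals of length $1/N_k$. By construction, $\mu_f$ restricted to $Q$ is the rescaled copy of the limit of $\tau_{k+1}[\tau_{k+2},\dots]$, and it has total mass $1/N_k$, which equals $\widehat{\pi_k}(Q)$. Also, $\widehat{\pi_k}$ restricted to $Q$ is uniform. So the whole discrepancy lives inside individual base squares, together with the way a rectangle cuts through them.

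Now fix a rectangle $R$. Its intersection with the level-$k$ grid structure consists of three kinds of base squares:
\begin{itemize}
\item base squares fully inside $R$, which contribute no error;
\item base squares met by $R$ in a sub-rectangle that touches only a vertical side strip or only a horizontal side strip;
\item at most four corner base squares.
\end{itemize}
Since $\pi_k$ is a permutation, the base squares meeting a vertical edge of $R$ all lie in one column, so there is at most one per vertical edge. Likewise there is at most one per horizontal edge. Hence at most $4$ base squares are cut by $R$ at all. For each such $Q$ it therefore suffices to show
$$\sup_{R'\subseteq Q}\bigl|\mu_f(R')-\widehat{\pi_k}(R')\bigr|=O\!\left(\frac{\log N_k}{N_k^{1/\alpha}}\right).$$

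Inside $Q$ we rescale by $N_k$. The problem becomes bounding $\ds(\mu_{g},\lambda)/N_k$, where $g$ is the measure-preserving bijection built from $\tau_{k+1},\tau_{k+2},\dots$ and $\lambda$ is Lebesgue measure. By the triangle inequality,
$$\ds(\mu_g,\lambda)\le \ds(\lambda,\widehat{\tau_{k+1}})+\ds\bigl(\widehat{\tau_{k+1}},\mu_g\bigr).$$
The first term is at most $C\log n_{k+1}/n_{k+1}$ by the choice of $\tau_{k+1}$.

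For the second term we apply the same base-square argument one level down. Again at most $4$ base squares of $\tau_{k+1}$ are cut, each of mass $1/n_{k+1}$. This gives
$$\ds\bigl(\widehat{\tau_{k+1}},\mu_g\bigr)\le \frac{4}{n_{k+1}}\,\ds(\mu_{g'},\lambda),$$
where $g'$ is the next tail. Iterating, we obtain a geometric-type series
$$\ds(\mu_g,\lambda)\le \sum_{j\ge 1} 4^{\,j-1}\,\frac{C\log n_{k+j}}{n_{k+j}}\prod_{i=1}^{j-1}\frac{1}{n_{k+i}}.$$
Because the $n_i$ are increasing and large, this is dominated by its first term, $O(\log n_{k+1}/n_{k+1})$. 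Alternatively one can use the trivial bound $\ds\le 1$ for the tail after one step, giving $O(\log n_{k+1}/n_{k+1}+1/n_{k+1})$ directly.

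Multiplying back by $1/N_k$, the error is $O(\log n_{k+1}/N_{k+1})$. The growth hypothesis $\beta' N_k\le N_{k+1}^{\alpha}$ gives $N_{k+1}\ge (\beta' N_k)^{1/\alpha}$. Hence $1/N_{k+1}=O(N_k^{-1/\alpha})$ and $\log n_{k+1}\le \log N_{k+1}=O(\log N_k)$, where the latter uses the upper growth bound $N_{k+1}^{\alpha}\le\beta N_k$. Together these give $O(\log n/n^{1/\alpha})$ with $n=N_k$.

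The main obstacle is the self-similarity step: justifying that $\mu_f$ restricted to a base square of $\pi_k$ is exactly the rescaled copy of the limit permuton of the tail construction, with the correct mass. I would derive it from the nesting $\supp\widehat{\pi_{k+1}}\subset\supp\widehat{\pi_k}$ and the weak convergence $\widehat{\pi_j}\to\mu_f$, using that boundaries of grid squares are $\mu_f$-null because $\mu_f$ has uniform marginals. A second point needs care: the claim that a rectangle cuts at most four base squares. This uses only that each row and each column of the grid contains exactly one base square.
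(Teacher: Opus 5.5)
Your proposal is correct and is essentially the paper's argument. It uses the same ingredients: at most four base squares are cut by a rectangle, rescaling by $1/N_k$ reduces each cut square to comparing $\lambda$ with $\widehat{\tau_{k+1}}$, and the two growth bounds turn $\log n_{k+1}/N_{k+1}$ into $O(\log N_k/N_k^{1/\alpha})$. The difference is bookkeeping: the paper telescopes $\sum_{i\ge k}\ds(\widehat{\pi_i},\widehat{\pi_{i+1}})$ along the convergent sequence, while you recurse through the self-similarity of $\mu_f$. Your one-step variant is a tidy shortcut: it bounds the tail by $4/N_{k+1}$, since $\mu_f$ and $\widehat{\pi_{k+1}}$ give equal mass to every base square of $\pi_{k+1}$, and so the infinite series is not needed.
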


    \begin{proof}
        We will analyze $\ds(\widehat{\pi_{k}}, \widehat{\pi_{k+1}})$.
        An axis-aligned rectangle $R$ evades or fully contains every base square of $\pi_k$, except for at most 4 ones. Thus $\ds(\widehat{\pi_{k}}, \widehat{\pi_{k+1}})$ can be bounded from above by four times their distance restricted to such a base square. However, restricted to any base square $\widehat{\pi_k}$ is simply a rescaled copy of the Lebesgue measure while $\widehat{\pi_{k+1}}$ is a rescaled copy of $\widehat{\tau_{k+1}}$. The scaling factor is $1/N_k$, thus by the choice of $\tau_{k+1}$, this implies that
        $$\ds(\widehat{\pi_{k}}, \widehat{\pi_{k+1}})\leq 4C\frac{\log n_{k+1}}{N_kn_{k+1}}\leq \frac{4C\log n_{k+1}}{N_{k+1}}.$$
        By the growth condition on $N_k$, we can further bound this by
        $$4C\frac{\log \beta N_k^{1/\alpha-1}}{(\beta' N_k)^{1/\alpha}}$$
        Then by triangle inequality:
        $$\ds(\widehat{\pi_k}, \mu_f)\leq 4C\sum_{i=k}^{\infty}\frac{\log \beta N_i^{1/\alpha-1}}{(\beta' N_i)^{1/\alpha}}=O\left(\frac{\log N_k}{N_k^{1/\alpha}}\right),$$
        by the superexponential growth of $N_k$ it is standard to check that the first term dominates this series. This concludes the proof of Claim \ref{claim:fast_growth_well_approximable}.
    \end{proof}

    Choosing $n_1, n_2, \dots$ satisfying the conditions of Claims \ref{claim:controlled_growth_holder}-\ref{claim:fast_growth_well_approximable} concludes the proof of the third statement of Theorem \ref{thm:regular_not_approximable}.
\end{proof}

\begin{remark}
    Observe that the previous construction can indeed be tweaked to construct permutons for which along a subsequence, \(D_n\) decays faster than any prescribed decreasing sequence. This follows from using very quickly growing $N_k$ and adjusting the proof of Claim \ref{claim:fast_growth_well_approximable}.
\end{remark}

\section{Approximation of the Brownian separable permuton}\label{sec:brownian_separable}

The content of the following theorem is illustrated in Figure~\ref{fig:maazoun}, reproduced from \cite{Maazoun_2020}.

\begin{theorem}[{{\cite[Theorem~1.6]{Maazoun_2020}}}] \label{thm:maazoun}
	Let $(\Delta_0,\Delta_1,\Delta_2)$ be a random vector with distribution \(\mathrm{Dirichlet}(\tfrac12,\tfrac12,\tfrac12)\).
	Let $\mu_0, \mu_1, \mu_2$ be independent and distributed like $\mu^p$, and
	conditionally on $\mu_0$, let $(X_0,Y_0)$ be a random point of distribution $\mu_0$.
	Let $\beta$ be an independent Bernoulli random variable with parameter $p$.
	We define the piecewise affine maps of the unit square into itself:
	\begin{equation}\begin{aligned}\label{BP_eq:definitiontheta}
	& \theta_0(x,y) & = (\eta_0(x), \zeta_0(y))& = \Delta_0(x,y) + (1-\Delta_0)(\mathbf{1}_{[x>X_0]},\mathbf{1}_{[y>Y_0]})  \\
	& \theta_1(x,y) & = (\eta_1(x), \zeta_1(y))&= \Delta_1(x,y) +\Delta_0(X_0,Y_0) +\Delta_2(0,\beta)              \\
	& \theta_2(x,y) & = (\eta_2(x), \zeta_2(y)) &= \Delta_2(x,y) +\Delta_0(X_0,Y_0) +\Delta_1(1,1-\beta)           
	\end{aligned}\end{equation}
	Then 
	\begin{equation}\Delta_0 (\theta_0{})_*\mu_0 + \Delta_1 (\theta_1{})_*\mu_1 + \Delta_2 (\theta_2{})_*\mu_2 \stackrel d= \mu^p,
	\label{BP_eq:ss}
	\end{equation}
\end{theorem}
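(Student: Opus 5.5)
We would prove this distributional identity by realizing $\mu^p$ from its standard excursion construction and cutting the underlying continuum random tree at a random branch point. Recall (from \cite{Maazoun_2020}) that $\mu^p$ can be built from a normalized Brownian excursion $e$ on $[0,1]$ together with i.i.d.\ Bernoulli($p$) signs $(S_m)$ attached to its local minima. One sets $s\lhd t$ for $s<t$ if the sign at the minimum of $e$ on $[s,t]$ is $+$, and $t\lhd s$ otherwise. With $\varphi(t)=\lambda\{s: s\lhd t\}$, the permuton is $\mu^p=(t\mapsto(t,\varphi(t)))_*\lambda$. The task is therefore to decompose the pair $(e,S)$ into three independent, rescaled copies of itself, with masses distributed as $\mathrm{Dirichlet}(\tfrac12,\tfrac12,\tfrac12)$.

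\emph{Step 1 (tree decomposition).} Use Aldous' decomposition of the Brownian CRT $\mathcal T_e$. Sample two independent uniform leaves $U_1,U_2$ (uniform times in $[0,1]$) and let $b$ be their branch point, i.e.\ the local minimum of $e$ between $U_1$ and $U_2$. Removing $b$ splits $\mathcal T_e$ into three components: the one containing the root and the two fringe subtrees containing $U_1$ and $U_2$. The classical result states that their masses are $\mathrm{Dirichlet}(\tfrac12,\tfrac12,\tfrac12)$. Moreover, after Brownian rescaling, the three pieces are independent normalized CRTs. The root component carries the image of $b$ as a marked point, which is uniformly distributed for its mass measure, independently of everything else. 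In excursion language, the two fringe subtrees correspond to two consecutive time intervals, of lengths $\Delta_1$ and $\Delta_2$ with the one containing $\min(U_1,U_2)$ first. Excising them leaves a contour function of total length $\Delta_0$, and $b$ becomes a uniform time $X_0$ in it.

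\emph{Step 2 (signs and order).} The signs at local minima inside each piece are i.i.d.\ Bernoulli($p$) and independent across pieces, so each piece yields an independent copy of $\mu^p$. The sign at $b$ itself is an independent Bernoulli($p$) variable $\beta$. One then checks how $\lhd$ behaves across pieces. For $s$ in the root piece and $t$ in a fringe piece, the minimum over $[s,t]$ is a minimum of the root piece lying between $s$ and the grafting point $X_0$, so $t$ sits at height $Y_0$ in the order of piece $0$. For $s,t$ in different fringe pieces, the minimum is $b$, so their relative order is decided by $\beta$.

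\emph{Step 3 (reading off the maps).} Translating Step 2 into coordinates gives exactly the affine maps $\theta_0,\theta_1,\theta_2$. In the root piece, points left of or below the graft are shifted by $0$, and points right of or above it are shifted by $\Delta_1+\Delta_2$. This is the formula $\Delta_0(x,y)+(1-\Delta_0)(\mathbf 1_{[x>X_0]},\mathbf 1_{[y>Y_0]})$. Pieces $1$ and $2$ are placed at $\Delta_0(X_0,Y_0)$, consecutively in $x$, and in $y$ with the order fixed by $\beta$, matching the offsets $\Delta_2(0,\beta)$ and $\Delta_1(1,1-\beta)$. That $(X_0,Y_0)$ has law $\mu_0$ given $\mu_0$ follows because $X_0$ is a uniform time of the root piece and $Y_0=\varphi_0(X_0)$. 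Assembling the three pieces then yields \eqref{BP_eq:ss}.

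The main obstacle is Step 1, namely the joint independence and Dirichlet law in the marked three-piece decomposition of the excursion, with the marked point uniform in the root piece. I would import it from Aldous' work on the CRT (equivalently, from the Bismut/Williams path decompositions of Brownian excursions) rather than reprove it. The only delicate point to verify is that the labelling of the fringe pieces (first versus second in contour order) is symmetric, so that the sign $\beta$ remains an independent Bernoulli($p$).
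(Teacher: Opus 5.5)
The paper contains no proof of this statement. It is quoted from Maazoun's Theorem~1.6 and used only as a black box, to generate the nested squares $Q_{i_1,\dots,i_n}$ in the proof of the Lipschitz-point lemma.

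Your sketch reconstructs the argument behind the citation: Aldous' recursive self-similarity of the Brownian excursion, combined with the signed-excursion representation of $\mu^p$. It is correct. The order computations check out:
\begin{itemize}
\item A fringe time is compared with every root-piece time exactly as the graft time is. This holds because $e$ drops strictly below $e(b)$ arbitrarily close to the outer sides of the excised interval, so the relevant minimum is an interior local minimum of the root piece.
\item Two times in different fringe pieces are compared through the sign at $b$.
\item Reading off coordinates gives $\theta_0,\theta_1,\theta_2$, with $(X_0,\varphi_0(X_0))$ distributed as $\mu_0$ given $\mu_0$.
\end{itemize}
Compared with the bare citation, this makes visible where the $\mathrm{Dirichlet}(\tfrac12,\tfrac12,\tfrac12)$ weights and the $\mu_0$-distributed graft point come from. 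Note that only the synthesis direction of Aldous' result is needed: gluing independent rescaled excursions at a uniform time of the first one yields a Brownian excursion, and the signs remain i.i.d.\ conditionally on the glued excursion.

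There are two bookkeeping remarks.

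First, the point you single out as delicate is not where the issue lies. $\beta$ is automatically Bernoulli and independent of everything else, because conditionally on $(e,U_1,U_2)$ the signs are i.i.d.\ and $b$ is a function of $(e,U_1,U_2)$. What the contour-order labelling actually affects is the joint law of $(\Delta_1,\Delta_2)$ and the two fringe excursions. You can settle this in either of two ways:
\begin{itemize}
\item quote Aldous' theorem in its excursion form, which is already stated in contour order; or
\item use the invariance of the law of $(e,U_1,U_2)$ under swapping the two subexcursions at $b$, together with the exchangeability of the Dirichlet law.
\end{itemize}

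Second, check the orientation. In $\theta_1,\theta_2$ the value $\beta=1$ places piece~1 above piece~2. In your convention, a $+$ sign at the minimum of $[s,t]$ puts $s$ below $t$. Hence $\beta$ is the indicator of a $-$ sign at $b$, and $\beta\sim\mathrm{Bernoulli}(p)$ requires $p$ to be the probability of the sign producing a decreasing arrangement. If instead $p$ is the probability of $+$, your argument yields the identity with $\beta$ and $1-\beta$ interchanged. So the parametrisation of $\mu^p$ has to be fixed consistently with the maps.
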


\begin{figure}[htb]
	\centering
	
	\includegraphics{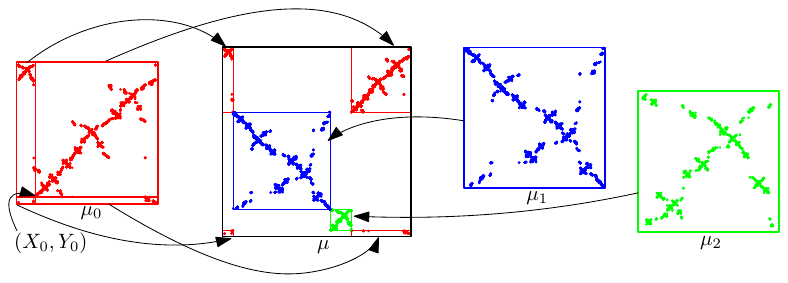}
	\caption{The construction of $\mu$ from three independent permutons distributed like $\mu$. Here $\beta = 1$ and $(\Delta_0,\Delta_1,\Delta_2) \approx (0.4,0.5,0.1)$. (Reproduced from \cite[Figure~2]{Maazoun_2020}; the original caption states \(\beta=0\), which appears to be a typo.)}  \label{fig:maazoun}
\end{figure}

We rely on this decomposition to prove Lemma~\ref{lemma:brownian_lip}, which in turn implies Theorem~\ref{thm:brownian_approx}.

\begin{proof}[Proof of Lemma \ref{lemma:brownian_lip}]
    Denote the unit square by $Q_{\emptyset}$. By Theorem \ref{thm:maazoun}, we see that there are two squares $Q_1, Q_2$ inside $Q_{\emptyset}$, such that $\mu^p(Q_i) = \lambda(\Pr_x Q_i)=\lambda(\Pr_y Q_i)$. (The blue and green squares correspond to these in Figure \ref{fig:maazoun}.) Inside these squares we can find two squares with the same property, denoted by $Q_{i, 1}$, $Q_{i, 2}$, $i=1, 2$. Proceeding recursively, we can define $Q_{i_1, i_2, \dots, i_n}$ for any $n$, and for any $\mathbf{i}=i_1, i_2, \dots$, the intersection $\bigcap_{n} Q_{i_1, \dots, i_n}$ almost surely consists of a single point $(x_{\mathbf{i}}, y_{\mathbf{i}})$. This will be a point of the graph of the function $f$ supporting $\mu^p$. We claim that with probability one some $\mathbf{i}$ will determine a point at which $f$ is Lipschitz.

    To simplify notation, let $Q'\subseteq Q$ be two consecutive squares in this sequence for some $\mathbf{i}$. Let their projections to the $x$-axis be $[a', b']$ and $[a, b]$. We say that $Q'$ is $r$-good for some $0<r<1/2$ if
    \begin{enumerate}[before=\leavevmode,label=\upshape(\Roman*)]
        \item\label{cond1} $a'>a+r(b-a)$, $b'<b-r(b-a)$,
        \item\label{cond2} $b'-a'>r(b-a)$,
    \end{enumerate}
    that is, \(Q'\) is not too small and its \(x\)-projection stays a positive distance away from both endpoints of the \(x\)-projection of \(Q\). We say that $\mathbf{i}$ is $(r, n_0)$-good if any $Q_{i_1, ..., i_n}$ is $r$-good for $n\geq n_0$. 

    We make the following claims.
    
    \begin{claim} \label{claim:good_implies_lip}
        If some $\mathbf{i}$ is $(r, n_0)$-good for some $r, n_0>0$, then $(x_{\mathbf{i}}, y_{\mathbf{i}})$ is well-defined and the supporting function \(f\) is Lipschitz at \(x_{\mathbf{i}}\).
    \end{claim}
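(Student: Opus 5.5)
The proof will mimic the argument of Claim \ref{claim:controlled_growth_holder}, with the geometric shrinking of the squares now controlled by condition \ref{cond2} instead of a prescribed growth rate. Write $Q_n=Q_{i_1,\dots,i_n}$, with $x$-projection $[a_n,b_n]$. Since each $Q_n$ satisfies $\mu^p(Q_n)=\lambda(\Pr_x Q_n)=\lambda(\Pr_y Q_n)$, it is a square, so its $y$-projection $[c_n,d_n]$ also has length $\ell_n=b_n-a_n$.

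\emph{Step 1: the point is well defined.} For $n\ge n_0$, condition \ref{cond1} gives $\ell_{n+1}<(1-2r)\ell_n$, so $\ell_n\to0$ geometrically. Hence the nested compact squares intersect in a single point $(x_{\mathbf i},y_{\mathbf i})$. Moreover, \ref{cond1} places $x_{\mathbf i}$ in the interior of every $[a_n,b_n]$, at distance at least $r\ell_n$ from both endpoints for $n\ge n_0$. Indeed, $x_{\mathbf i}\in[a_{n+1},b_{n+1}]\subseteq[a_n+r\ell_n,b_n-r\ell_n]$.

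\emph{Step 2: the graph above $[a_n,b_n]$ lies in $Q_n$.} By the defining property, $\mu^p(Q_n)=\lambda([a_n,b_n])$. Since $\mu^p=\mu_f$ has uniform $x$-marginal, $\mu^p([a_n,b_n]\times[0,1])=\lambda([a_n,b_n])$ as well. So $\mu^p$ puts no mass on $[a_n,b_n]\times([0,1]\setminus[c_n,d_n])$. Because $\mu^p$ is the pushforward of Lebesgue measure under $x\mapsto(x,f(x))$, this means $f(x)\in[c_n,d_n]$ for almost every $x\in[a_n,b_n]$. This holds simultaneously for all $n$ outside a null set. Lipschitzness at $x_{\mathbf i}$ is then understood for the a.e.\ defined $f$, after redefining $f$ on a null set if needed, with $f(x_{\mathbf i})=y_{\mathbf i}$ since $y_{\mathbf i}\in[c_n,d_n]$ for all $n$.

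\emph{Step 3: the Lipschitz estimate.} Take $x'$ with $0<|x'-x_{\mathbf i}|<r\ell_{n_0}$, and let $n\ge n_0$ be the largest index with $x'\in[a_n,b_n]$. Such an index exists because $x'\in[a_{n_0},b_{n_0}]$ by Step 1, and $\ell_n\to0$. Then $x'\notin[a_{n+1},b_{n+1}]$, while $x_{\mathbf i}\in[a_{n+2},b_{n+2}]$. By \ref{cond1} applied to $Q_{n+2}\subseteq Q_{n+1}$,
$$|x'-x_{\mathbf i}|\ge r\ell_{n+1}.$$
By \ref{cond2}, $\ell_{n+1}>r\ell_n$, so $|x'-x_{\mathbf i}|\ge r^2\ell_n$. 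On the other hand, Step 2 gives $f(x'),f(x_{\mathbf i})\in[c_n,d_n]$, so
$$|f(x')-f(x_{\mathbf i})|\le \ell_n\le r^{-2}|x'-x_{\mathbf i}|.$$
This is the Lipschitz bound at $x_{\mathbf i}$ with constant $r^{-2}$.

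The main obstacle is Step 2: rigorously turning the mass identity $\mu^p(Q_n)=\lambda(\Pr_x Q_n)$ into pointwise confinement of the graph, given that $f$ is only defined almost everywhere. Everything else is the same comparison of scales as in Claim \ref{claim:controlled_growth_holder}, with condition \ref{cond2} replacing the growth hypothesis $N_{k+1}^\alpha\le\beta N_k$ used there.
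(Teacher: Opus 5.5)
Your proposal is correct and follows the paper's own proof essentially step for step. You use the same choice of the last index $n$ with $x'\in[a_n,b_n]$, the same bound $|x'-x_{\mathbf{i}}|\ge r\ell_{n+1}> r^2\ell_n$ obtained from \ref{cond1} and then \ref{cond2}, and the same confinement of $f$ over $[a_n,b_n]$ to the $y$-projection of $Q_n$. Your Step 2 (deriving almost-everywhere confinement from the mass identity) and your restriction to $|x'-x_{\mathbf{i}}|<r\ell_{n_0}$ make explicit two points that the paper handles only informally.
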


    \begin{claim}\label{claim:exists_good}
        If $r$ is small enough then there exists an $(r, n_0)$-good $\mathbf{i}$ almost surely.
    \end{claim}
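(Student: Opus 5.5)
The claim should follow from a branching-process argument. The recursive structure behind the squares $Q_{i_1,\dots,i_n}$ is a binary tree, and $r$-goodness of a child square is determined by independent local randomness at its parent. The first step is to make this independence precise. Iterating Theorem \ref{thm:maazoun}, I would attach to every node $v$ of the infinite binary tree an i.i.d. copy of the triple $\big((\Delta_0^v,\Delta_1^v,\Delta_2^v),X_0^v,\beta^v\big)$. The point $X_0^v$ is uniform on $[0,1]$, since $\mu_0$ has uniform marginals; it is independent of the Dirichlet vector and of $\beta^v$.

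The permuton obtained by composing the maps $\theta_1,\theta_2$ along the tree has law $\mu^p$, and the squares $Q_{\mathbf i|n}$ are read off from this labelled tree. The event ``some $\mathbf i$ is $(r,n_0)$-good for some $n_0$'' is determined by the family of squares. So it suffices to prove that it has probability one in this coupled construction. I expect this identification to be the main obstacle. Theorem \ref{thm:maazoun} is only an identity in distribution, so one must check that iterating it yields a single consistent realisation. One must also check that the squares are a measurable function of the permuton, for instance as the maximal squares $Q$ with $\mu^p(Q)=\lambda(\Pr_x Q)$ in the self-similar decomposition. I would handle both points via Kolmogorov consistency and the uniqueness of the fixed point of the recursive distributional equation, following \cite{Maazoun_2020}.

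Next I compute goodness locally. By the formulas for $\eta_1,\eta_2$, inside a parent square of side $s$ the two children have $x$-projections of relative positions $[\Delta_0X_0,\ \Delta_0X_0+\Delta_1]$ and $[\Delta_0X_0+\Delta_1,\ \Delta_0X_0+\Delta_1+\Delta_2]$. Hence child $j\in\{1,2\}$ is $r$-good exactly when $\Delta_j>r$, together with explicit inequalities of the form $\Delta_0X_0+(\text{offset})>r$ and $\Delta_0(1-X_0)+(\text{offset})>r$, each of which only gets easier to satisfy as the offsets grow. The worst case is when the offsets vanish. Almost surely $\Delta_0,\Delta_1,\Delta_2>0$ and $X_0\in(0,1)$. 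Therefore, as $r\to0$, the probability that child $j$ is $r$-good tends to $1$ for each $j$.

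Consequently the $r$-good descendants form a Galton--Watson tree. Its offspring number is supported on $\{0,1,2\}$, and its mean $m(r)\to2$. Fix $r$ small enough that $m(r)>1$. Then the process is supercritical, so from any given node an infinite line of $r$-good descendants exists with some probability $q=q(r)>0$. The subtrees rooted at the $2^{n}$ nodes of depth $n$ are independent and identically distributed. Hence the probability that none of them carries an infinite good line is $(1-q)^{2^n}\to0$. So almost surely there is a node at some depth $n_0$ from which an infinite chain of $r$-good squares descends. Extending its address along this chain gives an $(r,n_0)$-good $\mathbf i$, which proves Claim \ref{claim:exists_good}.
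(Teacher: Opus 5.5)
Your proposal is correct and follows the paper's argument. The $r$-good descendants form a Galton--Watson tree with offspring in $\{0,1,2\}$ and mean tending to $2$, hence supercritical for small $r$. Almost sure existence then comes from independent trials on disjoint subtrees: you use all $2^n$ depth-$n$ subtrees to get $(1-q)^{2^n}\to0$, where the paper uses the disjoint initial segments $1,21,221,\dots$. Your offspring rule, which includes the right-endpoint conditions $\Delta_0(1-X_0)+(\text{offset})>r$, is more precise than the one written in the paper, and the coupling issue you flag is one the paper leaves implicit.
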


    These claims combined prove the lemma, thus the rest of the proof is devoted to verifying them.

    \begin{proof}[Proof of Claim \ref{claim:good_implies_lip}]
        The fact that \((x_{\mathbf i},y_{\mathbf i})\) is well-defined follows immediately from property~\ref{cond1}. To simplify notation, let $Q_n = Q_{i_1, \dots, i_n}$, and let $\Pr_x Q_n = [a_n, b_n]$.
        
        Fix $x\neq x_{\mathbf{i}}$ in $[0, 1]$. Then there exists $n\geq n_0$ such that $x\in [a_n, b_n]$, but $x \notin [a_{n+1}, b_{n+1}]$. We would like to quantify the distance between these points by giving a good lower bound. From the relative positions of these intervals, we have
        $$|x-x_{\mathbf{i}}|> \min(|x_{\mathbf{i}}-a_{n+1}|, |x_{\mathbf{i}}-b_{n+1}|)>  \min(|a_{n+2}-a_{n+1}|, |b_{n+2}-b_{n+1}|).$$
        Due to symmetry, it suffices to bound one of these differences. First applying \ref{cond1}, and then \ref{cond2}, we find
        $$a_{n+2}-a_{n+1}> r(b_{n+1}-a_{n+1})>r^2(b_n-a_n),$$
        hence 
        $$|x-x_{\mathbf{i}}|> r^2(b_n-a_n).$$
        On the other hand, for any $(x,y)\in \supp \mu^p$, as both $y$ and $y_{\mathbf{i}}$ are in $Q_n$, we have $|y-y_{\mathbf{i}}|\leq |a_n-b_n|$. These two bounds combined prove that the supporting function \(f\) is Lipschitz at \(x_{\mathbf{i}}\), verifying the claim.
    \end{proof}

    \begin{proof}[Proof of Claim \ref{claim:exists_good}]
        Notice that for any initial segment $i_1, \dots, i_{n_0}$, the $(r, n_0)$-good $\mathbf{i}$s are in bijection with the boundary of a Galton--Watson $GW(r)$ tree defined by the following offspring distribution:
        after sampling $(\Delta_0,\Delta_1,\Delta_2)$ according to $ \mathrm{Dirichlet}(\tfrac 1 2,\tfrac 1 2,\tfrac 1 2)$, we sample $X_0\sim \mathrm{Unif}(0, \Delta_0)$. If $X_0>r$, then if one of $\Delta_1, \Delta_2$ is larger than $r$, then there is 1 offspring, while if both are, then there are 2 offspring. If $X_0<r$, then there are no offspring. Then for the random number $Z(r)$ of offspring, $Z(r)\to 2$ in probability as $r\to 0$, hence $GW(r)$ survives with positive probability, yielding a non-empty boundary with positive probability $p_r$. Taking countably many pairwise disjoint initial segments (such as $1, 21, 221, \dots$) shows that there exists an $(r, n_0)$-good $\mathbf{i}$ for some $n_0$ if any of the countably many independent events with probability $p_r$ occurs, which clearly happens almost surely.   
    \end{proof}

\end{proof}

\section*{Acknowledgements}

The author is grateful to Balázs Ráth for communicating the problem and for many helpful and interesting discussions. The author also thanks Miklós Abért, István Berkes, Joanna Jasińska, and Bálint Virág for helpful discussions.

\printbibliography

\end{document}